\newcommand{\N}{\mathbb{N}}
\newcommand{\R}{\mathbb{R}}
\newcommand{\Z}{\mathbb{Z}}
\newcommand{\defeq}{\mathop{\mathrel{:}=}}
\renewcommand{\d}{\mathrm{d}}
\newcommand{\Hmm}[1]
\theoremstyle{plain}
\newtheorem{theorem}{Theorem}[section]
\newtheorem{lemma}[theorem]{Lemma}
\newtheorem{corollary}[theorem]{Corollary}
\theoremstyle{definition}
\newtheorem{remark}[theorem]{Remark}
\newtheorem{mydef}[theorem]{Definition}
\begin{document}

\title[Article Title]{Positive Harmonic Functions on Graphs with Nilpotent Group Actions}

\author[1]{\fnm{Matti} \sur{Richter}}\email{matti.richter@uni-potsdam.de}

\affil[1]{\orgdiv{Institute of Mathematics}, \orgname{University of Potsdam}, \orgaddress{\street{Am Neuen Palais 10}, \city{Potsdam}, \postcode{
14469}, \state{Brandenburg}, \country{Germany}}}

\keywords{graph, group, nilpotent, harmonic}

\abstract{
    We study directed weighted graphs which are invariant under a nilpotent and cocompact group action. In particular, we consider the conic section $\mathcal{K}$ of the set of positive harmonic functions. We characterise the set of extreme points of the convex and compact set $\mathcal{K}$ as the set of multiplicative elements in $\mathcal{K}$. Moreover, we study positive generalised eigenfunctions for a given parameter $\lambda$. We find that the topological space $\mathcal{M}_\lambda$ of multiplicative $\lambda$-harmonic functions is homeomorphic to a sphere for $\lambda$ below a certain threshold.}

\maketitle

\textbf{Acknowledgements.}
The author thanks Yehuda Pinchover for a valuable conversation which inspired the paper. Moreover, Matthias Keller is acknowledged for his continuous feedback regarding both content and structure of the work.

\section*{Preface}
Based on a seminal paper \cite{choquetDeny} by Choquet and Deny,  
Margulis studied positive harmonic functions on discrete nilpotent groups \cite{margulis}. Using Choquet's theorem, he proved that the harmonic functions can be represented by an integral over the multiplicative ones. In this setting, multiplicative functions are group homomorphisms from the group to the positive real numbers. An immediate consequence of this result is a Liouville property, i.e. any bounded harmonic function is constant. This propety had also been shown earlier for Markov chains with nilpotent transitive group actions by Dynkin and Maljutov in \cite{dynkin}. 

Later, Lin \cite{lin88} employed ideas of Margulis' proof to a continuous setting of linear differential operators. He showed that for any cocompact group action all bounded positive harmonic functions which satisfy $f(ghx) = f(hgx)$ for all group elements $g,h$ are constant.

Moreover, Agmon \cite{agmon} studied second order linear elliptic operators on $\R^d$ with real periodic coefficients and showed results very similar to Margulis. Inspired by the results of Allegretto \cite{allegretto1} \cite{allegretto2} \cite{allegretto3} and Piepenbrink  \cite{piepenbrink2} \cite{piepenbrink3} 
\cite{piepenbrink1} he considered positive harmonic functions for such operators. Using a proof of Pinchover, he proved an integral representation over harmonic exponentials, which are functions of the form $x \mapsto \exp(\langle \xi, x \rangle) \varphi(x)$ where $\xi \in \R^d$ and $\varphi$ is periodic. Agmon further investigated the structure of the family of sets of $\lambda$-harmonic exponentials for a real parameter $\lambda$. This family was also studied on symmetric spaces by Guivarc'h in \cite{guivarch}. Then in 1994, Lin and Pinchover \cite{linPinchover} generalised the results of Agmon to manifolds with nilpotent and cocompact group actions. In their setting, the operator is an elliptic operator which is invariant under the group action. Then the positive harmonic functions are represented via integrals over multiplicative harmonic functions. Lin and Pinchover also made use of Margulis' arguments regarding nilpotent group actions, which are not needed for the abelian case of $\Z^d$ acting on $\R^d$. In this sense Agmon's result is a special case of \cite{linPinchover}.

The setting of this paper is an directed, possibly non locally finite, graph with a nilpotent and cocompact group action. We study the positive generalised eigenfunctions of a Schrödinger operator in two steps.

First, we generalise the representation theorem of Margulis \cite{margulis}. Second, we describe the structure of the topological spaces of positive multiplicative generalised eigenfunctions in the spirit of \cite{agmon} and \cite{linPinchover}. 

The paper is structured as follows.
The first section introduces the relevant terminology and notation. 
The second section is devoted to the first main result, Margulis' theorem for group actions, \Cref{thm1}.
Here the convex and compact set $\mathcal{K}$ is studied which is a closure of the set of normalized positive harmonic functions. The theorem states that every harmonic element of $\mathcal{K}$ is invariant under the commutator subgroup and that the harmonic extreme points are multiplicative. As a consequence of this, we obtain a representation theorem for positive harmonic functions in \Cref{CorChoq}. As a further consequence, this reduces the study of positive harmonic functions to the case of an abelian group, see Section 2.2.

The third section is concerned with analogues of the results of Agmon and Lin and Pinchover. We assume that the graph is locally finite, to show additional results. The second main result, \Cref{MKisexK} characterises the extreme points of $\mathcal{K}$ as the set of multiplicative elements of $\mathcal{K}$. One inclusion follows directly from \Cref{thm1}, and the other is shown here with some additional work. After that, we extend our studies to positive $\lambda$-harmonic functions for a parameter $\lambda \in \R$. Since these are just harmonic functions for a different operator, we can apply \Cref{MKisexK} for each parameter. In view of this, we then go on to study the family of sets $\mathcal{M}_\lambda$ of positive, multiplicative $\lambda$-harmonic functions. The third result, \Cref{thm3} then describes the topological properties of this family. More precisely, it states that $\mathcal{M}_\lambda$ is either empty, a one-point set or homeomorphic to a $d$-sphere, depending on the parameter  $\lambda$. Here $d$ is an integer which is independent of $\lambda$.

\section{Setting}\label{setting}

\subsection{Graphs with group actions and a Harnack inequality}

We begin with some basic definitions regarding graphs and Schrödinger operators.

Let $X$ be a countable discrete space. Then a \emph{graph} over $X$ is defined as a weight function $b : X \times X \to [0, \infty)$ together with a potential $c : X \to \R$ such that 
$$ \sum_{y \in X} b(x,y) < \infty \quad$$
holds for all $x \in X$. We also usually assume $b(x,x) = 0$ for all $x \in X$ but this does not affect the harmonic functions. Moreover, some graphs with nonzero diagonal terms will occur later. Next, we define the \emph{degree} of a vertex $x \in X$ by
$$\deg(x) \defeq \sum_{y \in X} b(x,y) + c(x).$$
Note that even though this notation is in the style of \cite{keller} we do not assume that $b$ be symmetric here.

We say $y$ is a \emph{neighbour} of $x$ if $b(x,y) > 0$, and write $x \sim y$. We say the graph is \emph{locally finite} if every $x \in X$ has only finitely many neighbours, and we say the graph is connected, if for every $x,z \in X$ there is a path $x = y_1 \sim \dots \sim y_n = z$ in $X$. 

For a given graph $(b,c)$, we then define the \emph{Schrödinger operator} $H = H_{b,c}$ by

$$H_{b,c} : \mathrm{Dom}(H_{b,c}) \to C(X), \quad H_{b,c}f(x) \defeq \sum_{y \in X} b(x,y)(f(x) - f(y)) +c(x)f(x),$$
where $C(X)$ denotes the space of all (continuous) real-valued functions on $X$ and

$$\mathrm{Dom}(H_{b,c}) \defeq \{f \in C(X) \mid \sum_{y \in X}b(x,y)|f(x)| < \infty \}.$$
In the literature, the operator $H$ is also called \emph{Laplacian} when $c = 0$ or $c \geq 0$. However, we want to allow $c$ to assume arbitrary real values. We define the subspace of \emph{harmonic functions} as
$$\mathcal{H} \defeq \{ f \in \mathrm{Dom}(H) \mid Hf = 0\}$$
and the convex cone of \emph{superharmonic functions} as
$$\mathcal{S} \defeq \{ f \in \mathrm{Dom}(H) \mid Hf \geq 0\}.$$

Moreover, a function is called \emph{subharmonic}, if $(-f) \in \mathcal{S}$. We also write $\mathcal{H}^+$ and $\mathcal{S}^+$ for the set of nontrivial and non-negative elements of $\mathcal{H}$ and $\mathcal{S}$ respectively. It is easy to see that $\mathcal{H}^+$ and $\mathcal{S}^+$ are convex cones, and we will later see that they contain only strictly positive elements.
Note that a function $f \in \mathrm{Dom}(H)$ is superharmonic if and only if $Hf \geq 0$, which is equivalent to
$$\sum_{y \in X} b(x,y)f(y) \leq \deg(x)f(x), \quad \text{ for all } x \in X.$$
This can be seen by simply rearranging terms, and equality holds for all $x$ if and only if $f$ is harmonic. We will make use of this characterisation later.

Let $G$ be a (discrete) group acting on $X$. Then the group action is called \emph{cocompact} if there exists a compact (i.e. finite) $V \subseteq X$ such that $GV = X$. We call such a set $V$ a \emph{fundamental domain} if the orbits of elements in $V$ are pairwise disjoint. If the group action is cocompact, then clearly a fundamental domain $V$ always exists, as for a finite $V$ with more elements one may simply remove some.

In the following we will fix the space $X$ and a cocompact group action $G$ on $X$ with fundamental domain $V$. For $g \in G$, define the \emph{shift} by $g$ as
$$T_g : C(X) \to C(X), \quad T_gf(x) = f(g^{-1}x),$$
which is linear and continuous with respect to the product topology of $\R$ over $X$. Given a subgroup $R \subseteq G$, we say a function $f$ is $R$-\emph{multiplicative} with the \emph{character} $\gamma \in \operatorname{Hom}(R,\R_{>0})$ if 
$$T_gf = \gamma(g^{-1})f \quad \text{ for  all $g \in R$},$$ 
where $\operatorname{Hom}(R,\R_{>0})$ denotes the set of group homomorphisms from $R$ to the group of positive real numbers under multiplication. We only consider homomorphisms into the positive real numbers, as we will mainly be interested in positive functions. In the case of $R = G$ we will often just say that $f$ is \emph{multiplicative}. The corresponding character to such an $f$ will be denoted by $\gamma_f$. In the case of $\gamma_f = 1$, i.e. $T_gf = f$ for all $g \in R$, we say $f$ is $R$-invariant.

We say an operator $H$ on $C(X)$ is $R$-invariant, if $T_g(\operatorname{Dom}(H)) \subseteq \operatorname{Dom}(H)$ and
$$H \circ T_g = T_g \circ H\quad \text{ for all $g \in R$},$$
i.e. the operator commutes with the shift. Then, if a function $f$ is superharmonic with respect to an $R$-invariant Schrödinger operator, it follows that $T_gf$ is also superharmonic for all $g \in R$ as
$$H(T_gf) = T_g(Hf) \geq 0,$$
and analogous statements hold for subharmonicity and harmonicity. It turns out that the notion of $R$-invariance of the operator is equivalent to a similarly natural definition of $R$-invariance of the graph $(b,c)$.
\begin{lemma}\label{HGinv}
Let $R \subseteq G$ be a subgroup and $(b,c)$ be a graph over $X$. Then the Schrödinger operator $H = H_{b,c}$ is $R$-invariant if and only if $b$ and $c$ are both $R$-invariant, i.e. if
$$ b(gx,gy) = b(x,y) \text{ and } c(gx) = c(x) \text{ for all $x,y \in X$ and $g \in R$}.$$
Moreover, in this case $\deg(gx) = \deg(x)$ for all $x \in X$ and $g \in R$.
\begin{proof} Let $b, c$ be $R$-invariant, then for $f \in \operatorname{Dom}(H)$ and $x \in X$ we have
\begin{align*}
    HT_{g^{-1}}f(x) &= \sum_{y \in X} b(x,y)(f(gx) - f(gy)) + c(x)f(gx) \\
    &= \sum_{y \in X} b(gx,gy)(f(gx) - f(gy)) + c(gx)f(gx) \\
    &= \sum_{y \in X} b(gx, y)(f(gx) - f(y)) + c(gx)f(gx) \\
    &= Hf(gx) = T_{g^{-1}}Hf(x)
\end{align*}
and clearly $T_{g^{-1}}f \in \operatorname{Dom}(H)$ for all $g \in R$.

For the converse, assume that $H$ is $R$-invariant and let $g\in R$. Then for all $x \in X$ we get
$$c(x) = H1(x) = HT_{g^{-1}}1(x) = T_{g^{-1}}H1(x) = c(gx)$$
where $1$ denotes the constant function equal to $1$ on $X$.
Moreover, for $x \neq y \in X$ we have $gx \neq gy$ and
\begin{align*}b(x,y) &= -\left(\sum_{z \in X}b(x,z)(1_{gy}(gx) - 1_{gy}(gz)) +c(x)1_{gy}(gx)\right) \\ &= -HT_{g^{-1}}1_{gy}(x) = -T_{g^{-1}}H1_{gy}(x) \\
&= -\left(\sum_{z \in X}b(gx,z)(1_{gy}(gx) - 1_{gy}(z)) + c(gx)1_{gy}(gx)\right) \\
&= b(gx,gy), \end{align*}
and $b(x,x) = 0 = b(gx, gx)$ for all $x \in X$.
For the 'moreover' part, we simply observe that for $x \in X$ and $g \in R$ we have
$$\deg(gx) = \sum_{y \in X} b(gx,y) + c(gx) = \sum_{y \in X} b(x,g^{-1}y) + c(x) = \deg(x),$$
as the action $y \mapsto g^{-1}y$ is bijective.
\end{proof}
\end{lemma}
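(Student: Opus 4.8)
The plan is to prove both implications by exploiting the fact that the coefficients $b$ and $c$ can be recovered from the operator $H$ by applying it to suitable test functions, so that invariance of $H$ transfers to invariance of the coefficients and conversely.

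For the direction that $R$-invariance of $b$ and $c$ implies $R$-invariance of $H$, I would fix $g \in R$ and $f \in \operatorname{Dom}(H)$ and compute $H(T_{g^{-1}}f)(x)$ directly from the definition, using $T_{g^{-1}}f(z) = f(gz)$. After substituting $b(x,y) = b(gx,gy)$ and $c(x) = c(gx)$, the decisive move is the change of summation index $y \mapsto gy$, which is legitimate because the action of $g$ on $X$ is a bijection; this turns the sum into $Hf(gx) = T_{g^{-1}}Hf(x)$. Replacing $g$ by $g^{-1}$ (permissible since $R$ is a subgroup) yields $H \circ T_g = T_g \circ H$ on all of $\operatorname{Dom}(H)$, and the same change of variables shows that the summability condition defining $\operatorname{Dom}(H)$ is preserved, giving $T_g(\operatorname{Dom}(H)) \subseteq \operatorname{Dom}(H)$.

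For the converse I would extract the coefficients by testing $H$ against canonical functions, which I regard as the conceptual heart of the argument. Applying $H$ to the constant function $1$ gives $H1 = c$, since all difference terms vanish; because $T_{g^{-1}}1 = 1$, commuting $H$ with $T_{g^{-1}}$ immediately yields $c(gx) = c(x)$. To recover $b$, I would apply $H$ to the indicator $1_{\{y\}}$ and note that for $x \neq y$ all but one term drop out, leaving $H1_{\{y\}}(x) = -b(x,y)$; these indicators lie in $\operatorname{Dom}(H)$ since the defining sum is finite. Using $T_{g^{-1}}1_{\{gy\}} = 1_{\{y\}}$ together with $H \circ T_{g^{-1}} = T_{g^{-1}} \circ H$ and the injectivity of the action (so that $gx \neq gy$), this gives $b(gx,gy) = b(x,y)$, while the diagonal case is covered by the convention $b(x,x)=0$.

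Finally, the ``moreover'' statement is a short consequence: starting from $\deg(gx) = \sum_y b(gx,y) + c(gx)$, I would use $b$-invariance in the form $b(gx,y) = b(x,g^{-1}y)$, reindex the sum by the bijection $y \mapsto g^{-1}y$, and then apply $c(gx) = c(x)$. I expect the only delicate points to be bookkeeping rather than conceptual: checking that each reindexing is justified by the bijectivity of the group action, and that the test functions used to isolate $b$ and $c$ genuinely belong to $\operatorname{Dom}(H)$. Once the recovery identities $c = H1$ and $b(x,y) = -H1_{\{y\}}(x)$ for $x \neq y$ are in hand, the equivalence follows almost mechanically.
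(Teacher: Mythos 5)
Your proposal is correct and follows essentially the same route as the paper: a direct computation with a change of summation variable (justified by bijectivity of the action) for the easy direction, and recovery of the coefficients via the test functions $1$ and the singleton indicators — $c = H1$ and $b(x,y) = -H1_{\{y\}}(x)$ for $x \neq y$ — for the converse, with the diagonal handled by the convention $b(x,x)=0$ and the "moreover" part by reindexing. The only details the paper makes explicit that you gloss over (correctly) are the exact identity $T_{g^{-1}}1_{\{gy\}} = 1_{\{y\}}$ and the domain membership checks, both of which you address.
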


With this at hand, we can prove a Harnack inequality which is uniform with respect to the group action.

\begin{lemma}[Harnack inequality]\label{harnack}
Let $b$ be a connected graph and let $c$ be a potential such that $H = H_{b,c}$ is $R$-invariant. Then, for $x, y \in X$, there exists $C_{x,y} > 0$ such that
$$T_gf(x) \geq C_{x,y}T_gf(y)$$
holds for every $f \in \mathcal{S}^+$ and every $g \in R$.
\end{lemma}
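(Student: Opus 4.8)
The plan is to prove an ordinary (non-equivariant) Harnack inequality valid for all of $\mathcal{S}^+$ first, and then to obtain the uniformity in $g$ essentially for free from the $R$-invariance of $H$.

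The starting point is the rearranged superharmonicity condition $\sum_{z} b(x,z)h(z) \le \deg(x)h(x)$. Keeping only a single neighbour $y$ of $x$ on the left, one gets the one-step bound $b(x,y)h(y) \le \deg(x)h(x)$ for every $h \in \mathcal{S}^+$. To read this as $h(x) \ge C\,h(y)$ I need $\deg(x) > 0$, which is not automatic here because $c$ may take negative values; this is precisely where positivity of superharmonic functions must be invoked. So I would first show that every $h \in \mathcal{S}^+$ is strictly positive: if $h(x_0) = 0$, superharmonicity at $x_0$ forces $\sum_z b(x_0,z)h(z) \le \deg(x_0)h(x_0) = 0$, and since every summand is non-negative, $h$ must vanish at every $y$ with $x_0 \sim y$; connectedness then spreads the vanishing to all of $X$, contradicting $h \neq 0$. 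Once $h > 0$ everywhere, the bound $0 < b(x,y)h(y) \le \deg(x)h(x)$ together with $h(x) > 0$ forces $\deg(x) > 0$ at every vertex $x$ possessing a neighbour.

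This makes the local constant $C^{\mathrm{loc}}_{x,y} := b(x,y)/\deg(x) > 0$ well defined, giving $h(x) \ge C^{\mathrm{loc}}_{x,y}h(y)$ for all $h \in \mathcal{S}^+$ and all neighbours $x \sim y$. I would then globalise along a fixed path $x = y_0 \sim y_1 \sim \dots \sim y_n = y$ furnished by connectedness, chaining the one-step estimates to obtain $h(x) \ge C_{x,y}h(y)$ with $C_{x,y} := \prod_{i=0}^{n-1} C^{\mathrm{loc}}_{y_i,y_{i+1}} > 0$. The key feature is that $C_{x,y}$ depends only on $x$, $y$ and the chosen path, not on the particular $h$.

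Finally, for $f \in \mathcal{S}^+$ and $g \in R$, the shift $T_gf$ is again non-negative, nontrivial, and superharmonic because $H(T_gf) = T_g(Hf) \ge 0$ by $R$-invariance of $H$; hence $T_gf \in \mathcal{S}^+$. Applying the Harnack inequality just proved to $h = T_gf$ yields $T_gf(x) \ge C_{x,y}T_gf(y)$, and since $C_{x,y}$ is independent of the chosen superharmonic function it is in particular the same for every $g$. (Alternatively, writing the claim as $f(g^{-1}x) \ge C_{g^{-1}x,g^{-1}y}f(g^{-1}y)$, the $R$-invariance $b(gu,gv)=b(u,v)$, $\deg(gu)=\deg(u)$ makes the local constants invariant, so transporting the chosen path by $g^{-1}$ gives $C_{g^{-1}x,g^{-1}y} = C_{x,y}$.) The one delicate point in the whole argument is the positivity of $\deg$: since $c$ is an arbitrary real potential, this cannot be assumed and must be extracted from the strict positivity of $\mathcal{S}^+$; the remaining steps are the classical neighbour-to-neighbour propagation along a path.
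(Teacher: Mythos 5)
Your proof is correct and follows essentially the same route as the paper's: the rearranged superharmonicity condition yields the one-step bound $h(x) \geq \deg(x)^{-1}b(x,y)h(y)$, which is chained along a path supplied by connectedness with constants independent of the function, and the uniformity in $g \in R$ comes from the $R$-invariance of $H$. The only differences are organizational — you prove strict positivity of elements of $\mathcal{S}^+$ first and deduce $\deg > 0$ from it, whereas the paper rules out $\deg(x) \leq 0$ by a direct spreading contradiction and recovers strict positivity afterwards as \Cref{harCor1}, and you get the $g$-uniformity by applying the non-equivariant inequality to $T_gf \in \mathcal{S}^+$ rather than transporting the one-step estimates via invariance of $b$ and $\deg$, which is the variant the paper uses and you note parenthetically.
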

\begin{proof} If $X$ consists only of a single point or $\mathcal{S}^+ = \emptyset$, the statement is trivial. Assume $\#X \geq 2$ and let $f \in \mathcal{S}^+$. If $\deg(x) \leq 0$ for some $x \in X$, we have
$$0 \leq Hf(x) = \deg(x)f(x) - \sum_{y \in X}b(x,y)f(y) \leq 0$$
and hence $f(y) = 0$ follows whenever $b(x,y)$ does not vanish. Moreover, in this case we also have $\deg(y)f(y) = 0 \leq 0$, and it follows for all $x \sim y \in X$ that if $z \mapsto f(z)\deg(z)$ is not positive at $x$ then it is also not positive at $y$ and $f(y) = 0$. Since the graph is connected, this argument yields that $z \mapsto f(z) \mathrm{deg}(z)$ vanishes everywhere and $f = 0$, which is a contradiction to $f \in \mathcal{S}^+$. It follows that $\deg$ is strictly positive, so superhamonicity of $f$ in $gx$ yields
$$f(gx) \geq \deg(gx)^{-1}\sum_{gy \in X}b(gx,gy)f(gy) \geq \deg(x)^{-1}b(x,y)f(gy)$$
for all $g \in G$ and $x \sim y \in X$, using $b(gx,gy) = b(x,y)$ and $\deg(gx) = \deg(x)$. Finally, the statement follows by an inductive argument using connectedness of the graph, as the constants $ \mathrm{deg}(x)^{-1} b(x,y)$ are independent of $f$ and $g$.
\end{proof}

\begin{corollary}\label{harCor1}
For a Schrödinger operator $H$ associated to a connected graph every $f \in \mathcal{S}^+$ is strictly positive. 
\begin{proof}
First, note that the subgroup $R = \{ e\}$ acts trivially on $X$ such that $H$ is $R$-invariant.
Let $f \in \mathcal{S}^+$, in particular $f(x) > 0$ for some $x \in X$. It follows immediately from the Harnack inequality that $f(y) \geq C_{y,x}f(x) > 0$ for every $y \in X$, which shows $f > 0$.
\end{proof}
\end{corollary}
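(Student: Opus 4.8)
The plan is to read this off directly from the Harnack inequality, \Cref{harnack}, applied to the trivial subgroup. First I would take $R = \{e\}$, the subgroup consisting only of the identity. Since $T_e$ is the identity operator on $C(X)$, the operator $H$ trivially commutes with $T_e$, so $H$ is $R$-invariant and \Cref{harnack} applies. This is precisely the point of having proved the Harnack inequality for a general $R$-invariant operator: one can specialise to $R = \{e\}$ and still obtain strictly positive constants $C_{x,y}$ that are independent of the function under consideration.

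Next I would exploit that elements of $\mathcal{S}^+$ are by definition nontrivial and non-negative. Thus for $f \in \mathcal{S}^+$ there is at least one vertex $x_0 \in X$ with $f(x_0) > 0$. Fix any $y \in X$. Applying \Cref{harnack} to the pair $(y, x_0)$ and the group element $g = e$ yields a constant $C_{y,x_0} > 0$ with
$$f(y) = T_e f(y) \geq C_{y,x_0} T_e f(x_0) = C_{y,x_0} f(x_0) > 0.$$
Since $y$ was arbitrary, this gives $f(y) > 0$ for every $y \in X$, that is, $f > 0$.

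There is essentially no obstacle once the Harnack inequality is in hand; the corollary is a formal consequence, and all the real content sits in \Cref{harnack} (and, underlying it, the observation there that connectedness forces $\deg$ to be strictly positive on the support of a superharmonic function). If one preferred a self-contained argument avoiding the Harnack machinery, one could instead argue by zero-propagation: if $f(w) = 0$ for some $w$, then $\deg(w) f(w) = 0$, and the superharmonicity characterisation $\sum_{y} b(w,y) f(y) \leq \deg(w) f(w)$ forces the non-negative sum on the left to vanish, so $f(y) = 0$ for every neighbour $y$ of $w$; connectedness then propagates the zero set to all of $X$, contradicting nontriviality of $f$. Either route delivers the claim.
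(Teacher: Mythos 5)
Your proposal is correct and follows exactly the paper's own argument: specialise the Harnack inequality to the trivial subgroup $R = \{e\}$, use nontriviality of $f \in \mathcal{S}^+$ to find a point where $f$ is positive, and then propagate positivity everywhere via the constants $C_{y,x}$. The alternative zero-propagation sketch you mention is also valid, but your main route is the same as the paper's, so nothing further is needed.
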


From this we see that $\mathcal{H}^+$ contains only strictly positive elements. Moreover, we get the following compactness result for a section of the cone $\mathcal{S}^+$.
\begin{corollary}\label{Scomp}
Let $H$ be a Schrödinger operator associated to a connected graph $(b,c)$ on $X$ and fix $x_0 \in X$. Then the conic section  
$$\mathcal{S}^+_1 \defeq \{f \in \mathcal{S}^+\mid f(x_0) = 1 \}$$ is compact with respect to the product topology of $\R$ over $X$.

\begin{proof}
As in the last corollary, we apply the Harnack inequality for the trivial subgroup $\{e\}$. We first show that $\mathcal{S}_1^+$ is closed. Let $f_n \in C(X)$ be a sequence converging to $f \in C(X)$ in the product topology, i.e. it converges pointwise. Then $f$ is clearly still non-negative and $f(x_0) = 1$ so $f$ is also nontrivial, and by Fatou's lemma we have
$$\sum_{y \in X} b(x,y)f(y) \leq \liminf_{n \to \infty} \sum_{y \in X} b(x,y) f_n(y) \leq \deg(x) \lim_{n \to \infty}f_n(x) = \deg(x)f(x),$$
so $f$ is superharmonic, hence $f \in \mathcal{S}^+_1$ and $\mathcal{S}^+_1$ is closed.

Moreover, for any $f \in \mathcal{S}^+_1$ it follows from the Harnack inequality and $f(x_0) = 1$ that $f(x) \in [C_{x,x_0}, C_{x_0,x}^{-1}]$, and hence

$$\mathcal{S}^+_1 \subseteq \prod_{x \in X} [C_{x,x_0}, C_{x_0,x}^{-1}]$$
where the right hand side is compact by Tychonoff's theorem, and compactness of $\mathcal{S}^+_1$ follows readily.
\end{proof}
\end{corollary}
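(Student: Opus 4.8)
The plan is to show that $\mathcal{S}^+_1$ is a closed subset of a compact product space; compactness then follows immediately, since a closed subset of a compact space is compact. The tool throughout is the Harnack inequality of \Cref{harnack} applied to the trivial subgroup $R = \{e\}$, which acts trivially on $X$ so that $H$ is automatically $R$-invariant. This supplies, for each pair $x, y \in X$, a constant $C_{x,y} > 0$ with $f(x) \geq C_{x,y} f(y)$ for every $f \in \mathcal{S}^+$. Since $X$ is countable, $\R^X$ is metrizable in the product topology, so I may work with sequences rather than nets.

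For the containment in a compact set, I would exploit the normalisation $f(x_0) = 1$ together with two applications of Harnack. Taking $y = x_0$ gives the lower bound $f(x) \geq C_{x,x_0} f(x_0) = C_{x,x_0}$, while taking $x = x_0$ (with $x$ in the role of $y$) gives $1 = f(x_0) \geq C_{x_0,x} f(x)$, hence the upper bound $f(x) \leq C_{x_0,x}^{-1}$. Thus every $f \in \mathcal{S}^+_1$ lies in the product $\prod_{x \in X} [C_{x,x_0}, C_{x_0,x}^{-1}]$, which is compact by Tychonoff's theorem. Both bounds are independent of $f$, which is exactly what the uniform Harnack estimate provides.

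For closedness, I would take $f_n \in \mathcal{S}^+_1$ converging pointwise to some $f \in C(X)$. The conditions $f \geq 0$ and $f(x_0) = 1$ pass to the limit trivially, so $f$ is non-negative and nontrivial. The delicate point, and the main obstacle, is preserving superharmonicity in the form $\sum_{y} b(x,y) f(y) \leq \deg(x) f(x)$: since the graph need not be locally finite, this is an infinite sum, and one cannot simply interchange limit and summation to pass a finite linear condition through the limit. The resolution is Fatou's lemma with respect to the measure $b(x,\cdot)$ on the countable set $X$: because all terms are non-negative, $\sum_y b(x,y) f(y) \leq \liminf_{n} \sum_y b(x,y) f_n(y) \leq \liminf_{n} \deg(x) f_n(x) = \deg(x) f(x)$, using superharmonicity of each $f_n$ and pointwise convergence at the single point $x$. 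The crucial feature is that Fatou supplies the inequality in precisely the right direction, lower-bounding the limiting sum by a $\liminf$ that is in turn dominated by the convergent right-hand sides. This gives $f \in \mathcal{S}^+_1$, establishing closedness and completing the argument.
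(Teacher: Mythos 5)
Your proposal is correct and follows essentially the same route as the paper's proof: containment of $\mathcal{S}^+_1$ in the compact product $\prod_{x \in X} [C_{x,x_0}, C_{x_0,x}^{-1}]$ via the Harnack inequality for the trivial subgroup and Tychonoff's theorem, plus closedness under pointwise limits via Fatou's lemma applied to the measure $b(x,\cdot)$. Your explicit remark that countability of $X$ makes the product topology metrizable (justifying the use of sequences) is a small point the paper leaves implicit, but otherwise the arguments coincide.
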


It should be noted that the set $\mathcal{H}^+_1$ is not necessarily closed, so we cannot conclude its compactness. This issue is addressed in the following section.

\subsection{Convex and compact sets and Choquet theory}
Let $(b,c)$ be a connected graph on $X$ with a given $G$-action such that $H = H_{b,c}$ is $G$-invariant. Moreover, fix a point $x_0 \in X$.
On $C(X)$, we will consider the product topology of $\R$ over $X$. For a set $A \subseteq C(X)$, we denote by $\mathrm{ex} \: A$ the set of extreme points and by $\overline{\mathrm{conv}} A$ the closed convex hull. We study the convex cone $\mathcal{H}^+$ of positive harmonic functions. However, in order to apply Choquet theory, we need compact and convex sets, so we only take a section of this cone and then take the closure, leading to the definition
$$\mathcal{K} \defeq \overline{\{f \in \mathcal{H}^+ \mid f(x_0) = 1\}}.$$
 Taking only those functions with $f(x_0) = 1$ is no real loss of generality, since the Schrödinger operator is linear, so scaling a function by a positive number commutes with the operator $H$. On the other hand, taking the closure slightly complicates the situation for non-locally-finite graphs, since the functions in $\mathcal{K}$ may not all be harmonic. However, we still have $\mathcal{K} \subseteq \mathcal{S}^+_1$ since $\mathcal{H}^+ \subseteq \mathcal{S}^+$  and $\mathcal{S}^+_1$ is compact by \Cref{Scomp}, which also shows that $\mathcal{K}$ is compact. Moreover, it follows immediately from the definition that $\mathcal{K}$ is convex.

For a subgroup $R \subseteq G$ we define
$$\mathcal{K}^R \defeq \{f \in \mathcal{K} \mid f \text{ is $R$-invariant}\} = \mathcal{K} \cap \bigcap_{g \in R} \mathrm{ker}(T_g - I),$$
where $I$ denotes the identity on $C(X)$ and $\mathrm{ker}(L)$ denotes the kernel of an operator $L$. Since the kernel of $T_g -I$ is a closed subspace of $C(X)$ for every $g \in G$, and $\mathcal{K}$ is compact and convex, it follows that $\mathcal{K}^R$ is compact and convex as well. 

The sets $\mathcal{K}^R$ are defined this way so we may apply Choquet's theorem, see \cite{phelps}, which states the following: Let $K$ be a compact and convex subset of a locally convex vector space such that $K$ is metrizable. Then, for each $f \in K$ there exists a Borel measure $\mu_f$ on the set of extreme points $\mathrm{ex} \: K$ such that

$$f = \int\displaylimits_{\mathrm{ex}\:K} k \:d\mu_f(k).$$

This turns out to be particularly useful for finding harmonic functions. Indeed, taking $K = \mathcal{K}$ we will see that almost all elements of $\mathrm{ex} \: \mathcal{K}$ are harmonic and multiplicative, when the measure $\mu_f$ is induced by a harmonic element of $\mathcal{K}$. Then the problem of finding all positive harmonic functions reduces to just finding the multiplicative ones. The first part of this statement is shown in the following lemma, due to which we only need to study the harmonic extreme points of $\mathcal{K}$. A version of this lemma for the special case of Cayley Graphs can be found in \cite[Equation (3)]{margulis}.

\begin{lemma}\label{harmEx}
Let $R \subseteq G$ and $f \in \mathcal{K}^R$ be harmonic. Let $\mu_f$ be the measure obtained by Choquet's theorem such that
$$f = \int\displaylimits_{\mathrm{ex}\: \mathcal{K}^R} k \: d\mu_f(k),$$
then $\mu_f$ is concentrated on the harmonic functions, i.e. the set
$$\{k \in \mathrm{ex} \: \mathcal{K}^R \mid Hk \neq 0\}$$
is a null set with respect to $\mu_f$.

\begin{proof}
For a given $x \in X$, harmonicity of $f$ yields $\deg(x)f(x) = \sum_{y \in X}b(x,y)f(y)$, and hence
\begin{align*}
    \int\displaylimits_{\mathrm{ex}\: \mathcal{K}^R} \deg(x)k(x) \: d\mu_f(k) &= \deg(x)\int\displaylimits_{\mathrm{ex}\: \mathcal{K}^R} k(x) \: d\mu_f(k) \\
    &= \deg(x)f(x) \\
    &= \sum_{y \in X}b(x,y)f(y) \\
    &= \sum_{y \in X}b(x,y)\int\displaylimits_{\mathrm{ex}\: \mathcal{K}^R} k(y) \: d\mu_f(k) \\
    &= \int\displaylimits_{\mathrm{ex}\: \mathcal{K}^R} \sum_{y \in X}b(x,y)k(y) \: d\mu_f(k),
\end{align*}
where we are allowed to interchange the sum and the integral since $k$ and $b$ are non-negative. Since the left hand side is clearly finite, it follows that
$$ \int\displaylimits_{\mathrm{ex}\: \mathcal{K}^R} Hk(x) \: d\mu_f(k) = 0.$$
Moreover, since every $k \in \mathcal{K}^R$ is superharmonic, the integrand is nonnegative. From this it follows that for each $x \in X$ the set of extreme points with $Hk(x) \neq 0$ has measure zero and, as $X$ is countable, we conclude that 
$$\{k \in \mathrm{ex}\:\mathcal{K}^R \mid Hk \neq 0\} = \bigcup_{x \in X} \{k \in \mathrm{ex} \:\mathcal{K}^R \mid Hk(x) \neq 0\}$$
is a null set with respect to $\mu_f$.
\end{proof}
\end{lemma}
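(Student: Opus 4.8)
The plan is to turn the statement into a single pointwise identity and then exploit the sign of $Hk$. Fix a vertex $x \in X$. Because the coordinate evaluation $k \mapsto k(x)$ is continuous in the product topology, it is bounded on the compact set $\mathcal{K}^R$ and hence $\mu_f$-integrable, so the Choquet representation gives $f(x) = \int_{\mathrm{ex}\:\mathcal{K}^R} k(x)\,d\mu_f(k)$ for every $x$. I would then write out $0 = Hf(x) = \deg(x)f(x) - \sum_{y\in X} b(x,y)f(y)$, using harmonicity of $f$, and substitute this representation into each term.

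The only genuine manipulation is to move the sum over $y$ inside the integral in the term $\sum_{y} b(x,y)\int k(y)\,d\mu_f(k)$. Since $b(x,\cdot)\ge 0$ and every $k\in\mathcal{K}^R\subseteq\mathcal{S}^+_1$ is nonnegative, Tonelli's theorem applies with no integrability hypothesis and produces $\int \sum_y b(x,y)k(y)\,d\mu_f(k)$. Combining the two terms collapses everything to $\int_{\mathrm{ex}\:\mathcal{K}^R} Hk(x)\,d\mu_f(k) = Hf(x) = 0$.

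Now comes the structural input. Because $\mathcal{K}^R\subseteq\mathcal{S}^+_1$, every $k$ in sight is superharmonic, so $Hk(x)\ge 0$. An integral of a nonnegative measurable function that equals zero forces the integrand to vanish almost everywhere, whence $\{k : Hk(x)\neq 0\}$ is $\mu_f$-null for each fixed $x$. Since $X$ is countable, $\{k : Hk\neq 0\} = \bigcup_{x\in X}\{k : Hk(x)\neq 0\}$ is a countable union of null sets and hence itself $\mu_f$-null, which is precisely the assertion.

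I expect the step requiring the most care to be the bookkeeping around this interchange, rather than any deep idea. One should confirm that $\deg(x)f(x)<\infty$ so that the difference defining $Hf(x)$ is meaningful; that $\sum_y b(x,y)k(y)\le \deg(x)k(x)<\infty$ for $\mu_f$-a.e.\ $k$, so that $Hk(x)$ is genuinely defined on the support; and that $k\mapsto Hk(x)$ is Borel measurable, which follows since it is the difference of the continuous map $k\mapsto\deg(x)k(x)$ and the countable sum $k\mapsto\sum_y b(x,y)k(y)$ of nonnegative continuous maps, hence lower semicontinuous. With nonnegativity of $b$ and $k$ doing all of the analytic work, no dominated-convergence argument is needed anywhere.
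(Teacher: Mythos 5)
Your proposal is correct and follows essentially the same route as the paper: represent $f(x)$ pointwise via the Choquet measure, use harmonicity of $f$ and nonnegativity of $b$ and $k$ (Tonelli) to interchange the sum and the integral, conclude $\int_{\mathrm{ex}\:\mathcal{K}^R} Hk(x)\,d\mu_f(k) = 0$, and then invoke superharmonicity of elements of $\mathcal{K}^R$ to get $Hk(x)=0$ $\mu_f$-a.e.\ for each $x$, finishing with the countable union over $X$. The extra bookkeeping you flag (measurability of $k \mapsto Hk(x)$, finiteness of $\deg(x)f(x)$, and $\sum_y b(x,y)k(y) \le \deg(x)k(x)$ on the support) is a slightly more careful rendering of what the paper dismisses with ``the left hand side is clearly finite,'' not a different argument.
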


\section{Positive harmonic functions for nilpotent groups}

\subsection{Invariance under the commutator and multiplicativity}

In this chapter, we focus on a group action by a nilpotent group on a graph. In particular, we want to study the set $\mathcal{K}$ introduced in the previous chapter in more detail. We prove a theorem, which generalises the results of \cite{margulis} from Cayley Graphs on a nilpotent group to cocompact graphs with a nilpotent group action. To this end, fix a connected graph $(b,c)$ over $X$ with nilpotent and cocompact group action $G$ and fundamental cell $V$ and fix $x_0 \in X$. Recall that $\mathcal{K} = \overline{\{f \in \mathcal{H}^+ \mid f(x_0) =1\}}$. The first main result of the paper is the following theorem.

\begin{theorem}[Margulis' theorem for group actions]\label{thm1}
Let $(b,c)$ be a connected graph on $X$ with a nilpotent and cocompact group action $G$ such that $H_{b, c}$ is $G$-invariant. Fix $x_0 \in X$, then the set $\mathcal{K}$ satisfies the following:
\begin{enumerate}[label=(\alph*)]
    \item Every element of $\mathcal{K}$ is $[G,G]$-invariant.
    \item Every harmonic extreme point of the convex set $\mathcal{K}$ is multiplicative.
\end{enumerate}
\end{theorem}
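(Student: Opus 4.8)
The plan is to reduce the whole theorem to the single statement that every harmonic extreme point of $\mathcal{K}$ is multiplicative, which is exactly assertion (b). Granting this, (a) follows quickly: if $f$ is a harmonic extreme point with character $\gamma_f \in \operatorname{Hom}(G,\R_{>0})$, then $\gamma_f$ kills the commutator subgroup because $\R_{>0}$ is abelian, so $T_g f = \gamma_f(g^{-1}) f = f$ for every $g \in [G,G]$. To pass from extreme points to all of $\mathcal{K}$, I would first treat an arbitrary harmonic $f \in \mathcal{K}$: by Choquet's theorem write $f = \int_{\mathrm{ex}\,\mathcal{K}} k \, d\mu_f(k)$, invoke \Cref{harmEx} to see that $\mu_f$ is concentrated on the harmonic extreme points, which are $[G,G]$-invariant by the above, and push the continuous linear map $T_g$ through the barycentre (evaluating at any point $x$ and using $T_g k = k$ for $\mu_f$-a.e.\ $k$) to get $T_g f = f$. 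Finally, a general element of $\mathcal{K}$ is a limit of normalized harmonic functions, and since each $T_g$ is continuous, $[G,G]$-invariance survives the limit. Hence everything rests on showing harmonic extreme points are multiplicative.

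The first key step is a central lemma: for $z \in Z(G)$ and a harmonic extreme point $f$, one has $T_z f = \gamma(z)\, f$ for some constant $\gamma(z)>0$. I would prove this in two moves. First, a uniform comparability $c_1(z)\, f \le T_z f \le c_2(z)\, f$ holds on all of $X$ for every $f \in \mathcal{S}^+$: writing an arbitrary $x \in X$ as $x = gv$ with $v$ in the finite fundamental domain $V$, centrality of $z$ lets me rewrite $z^{-1}x = z^{-1}gv = g\,z^{-1}v$, and applying the uniform Harnack inequality \Cref{harnack} to the pair $(z^{-1}v, v)$ and the shift $T_{g^{-1}}$ bounds $f(z^{-1}x)/f(x)$ by constants depending only on $v$, of which there are finitely many. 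Second, since $H$ is $G$-invariant, $T_z f$ is again positive and harmonic, and comparability gives $0 \le T_z f \le c_2(z)\, f$; because $f$ is extreme in $\mathcal{K}$ it is minimal (any positive harmonic $u \le f$ is a scalar multiple of $f$, via the decomposition $f = u + (f-u)$ into positive harmonic functions and extremality), so a comparable harmonic minorant must be proportional to $f$. Evaluation at $x_0$ then identifies $\gamma(z) = f(z^{-1}x_0)$, and $\gamma$ is a homomorphism on $Z(G)$.

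The heart of the argument, where nilpotency really enters, is to upgrade central multiplicativity to multiplicativity on all of $G$ by induction on the nilpotency class, quotienting by the centre $Z = Z(G)$ so that $G/Z$ has smaller class. The essential new input is that the central character $\gamma$ must be trivial on central commutators, and this is where I expect the main difficulty. For $g \in Z\cap[G,G]$ I would exploit the distortion of the commutator subgroup inside a nilpotent group: powers satisfy $|g^{m}|_{\mathrm{word}} = O(\sqrt m)$ (in class two this is visible from $[a^n,b^n]=[a,b]^{n^2}$), so $g^{-m}x_0$ stays within graph distance $O(\sqrt m)$ of $x_0$; since $\gamma(g)^{m} = f(g^{-m}x_0)$ and the Harnack inequality bounds the latter by $C^{O(\sqrt m)}$, letting $m \to \infty$ forces $\gamma(g)=1$. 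With $\gamma$ trivial on $Z\cap[G,G]$, the equivariance $T_z f = \gamma(z)f$ lets $f$ descend to a harmonic extreme point of the conic section for a twisted, $G/Z$-invariant Schrödinger operator on the quotient graph $Z\backslash X$; the inductive hypothesis makes the descended function multiplicative, and lifting its character together with $\gamma$ yields a homomorphism $G \to \R_{>0}$ realizing $f$ as multiplicative. The base case is $G$ abelian, where $Z(G)=G$ and the central lemma alone suffices.

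I expect the main obstacle to be twofold: making the distortion estimate precise in arbitrary nilpotency class (controlling the word length of powers along the lower central series and matching it against the exponentially growing Harnack constants), and carrying out the quotient construction rigorously — defining the twisted weights on $Z\backslash X$, checking $G/Z$-invariance and connectedness, and verifying that harmonicity and extremality are preserved under descent and lift — especially since the action of $Z$ on $X$ need not be free and the graph need not be locally finite.
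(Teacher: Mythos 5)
Your architecture is genuinely different from the paper's, though the mechanism underneath is the same Margulis-type tension between Harnack bounds and exponential growth of characters. The paper never leaves the space $X$: it inducts along the lower central series using the sets $\mathcal{K}^R$ of $R$-invariant elements, proves $Q(R)$-multiplicativity of harmonic extreme points (\Cref{lem1} — your central lemma is the special case $R=\{e\}$, $Q(R)=Z(G)$, proved by the same minimality-via-extremality trick), and kills characters on commutators $q\in Q(R)$ in \Cref{lem2} via the identity $[g,l^n]=q^nr_n$ mod $R$ together with a Schauder--Tychonoff fixed point and the uniform Harnack inequality \Cref{harnack}; because the induction is run modulo $R$, only this class-two commutator identity is ever needed, and no quotient spaces appear. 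Your steps one and two are sound: the comparability $c_1(z)f\le T_zf\le c_2(z)f$ and the minimality argument are correct, and the distortion-versus-Harnack argument does force $\gamma\equiv 1$ on $Z(G)\cap[G,G]$ — with the caveats that the $O(\sqrt{m})$ word growth of powers of elements of $[G,G]$ in arbitrary nilpotency class is a nontrivial (though known) theorem of which you only need sublinearity, and that, since Theorem~\ref{thm1} does not assume local finiteness, $G$ need not be finitely generated, so you must pass to the finitely generated subgroup generated by a commutator expression of the given element before speaking of word length. Your reduction of (a) to (b) via \Cref{harmEx}, Choquet and density is fine (the paper derives (a) first and then (b), but the implication you use is valid).

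The genuine gap is the descent step. A ``twisted, $G/Z$-invariant Schrödinger operator on $Z\backslash X$'' does not exist as described: to identify $\gamma$-equivariant functions with functions on $Z\backslash X$ you must divide by a gauge $\eta$ satisfying $\eta(zx)=\gamma(z)\eta(x)$, and the resulting quotient weights $\bar b(Zx,Zy)=\sum_{z}b(x,zy)\,\eta(zy)/\eta(x)$ are $G/Z$-invariant only if $\eta(gy)/\eta(gx)=\eta(y)/\eta(x)$ for all $g\in G$, i.e.\ only if $\eta$ is already $G$-multiplicative — which is essentially the conclusion of the theorem, so the construction as stated is circular (a gauge built from orbit representatives will not be $G$-compatible). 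The repair is to descend \emph{untwisted}: once $\gamma$ is trivial on $Z(G)\cap[G,G]$, the central subgroup $N:=\ker\gamma$ contains the last nontrivial term $G_c$ of the lower central series (since $[G,G_c]=\{e\}$ gives $G_c\subseteq Z(G)\cap[G,G]$), so $G/N$ has strictly smaller nilpotency class, $f$ is honestly $N$-invariant, and the paper's own factoring construction (\Cref{factorsDef}, \Cref{factorsLem}), which requires neither local finiteness nor a free action, yields a connected graph on $X_N$ with cocompact $G/N$-action and $G/N$-invariant operator. Extremality descends because $\Phi^{-1}$ is continuous and carries normalized positive harmonic functions to normalized positive harmonic $N$-invariant functions, hence carries $\mathcal{K}_{b_N,c_N}$ into $\mathcal{K}_{b,c}$; so a convex decomposition of $\Phi(f)$ downstairs lifts to one of $f$ upstairs. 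With that replacement (and the distortion input either cited precisely or replaced by the commutator identity $[g,l^n]\equiv q^n$ as in \Cref{lem2}), your proof closes, at the cost of more machinery than the paper's argument, which avoids quotients and distortion theory altogether.
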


\begin{corollary}\label{CorChoq}
In the setting of \Cref{thm1}, for every $f \in \mathcal{H}^+$ there exists a probability measure $\mu_f$ supported on the harmonic and multiplicative elements of $\mathcal{K}$ such that
$$f = f(x_0)\int\displaylimits_{\mathcal{K}} k d \mu_f.$$

\begin{proof}
We observe that $ \tilde{f} := f/f(x_0) \in\mathcal{K}$. Let $\mu_f$ be the representing measure of $\tilde{f}$ obtained by Choquet's theorem. Then the equation holds and it follows from \Cref{harmEx} that $\mu_f$ is concentrated on the harmonic extreme points of $\mathcal{K}$. Finally, \Cref{thm1} yields that these elements are multiplicative.
\end{proof}
\end{corollary}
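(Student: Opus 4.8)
The plan is to obtain the representation as a direct consequence of Choquet's theorem combined with the two results already established, namely \Cref{harmEx} and \Cref{thm1}. Given an arbitrary $f \in \mathcal{H}^+$, I would first normalize. By \Cref{harCor1} every element of $\mathcal{S}^+$, and in particular $f$, is strictly positive, so $f(x_0) > 0$ and $\tilde f \defeq f/f(x_0)$ is well defined. Since $H$ is linear, $\tilde f$ is again a positive harmonic function, now satisfying $\tilde f(x_0) = 1$, so $\tilde f \in \{g \in \mathcal{H}^+ \mid g(x_0) = 1\} \subseteq \mathcal{K}$. Once $\tilde f$ is exhibited as a point of $\mathcal{K}$, the whole argument runs inside $\mathcal{K}$ and is rescaled back at the very end.

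To invoke Choquet's theorem I need $\mathcal{K}$ to be a metrizable compact convex subset of a locally convex space. Here I would note that $C(X) = \R^X$ carries the product topology, which is locally convex as a product of copies of $\R$ and, crucially, metrizable because $X$ is countable. Since $\mathcal{K}$ was shown to be compact and convex in Section 1.2, Choquet's theorem applies to the point $\tilde f \in \mathcal{K}$ and produces a Borel measure $\mu_f$ on $\mathrm{ex}\,\mathcal{K}$ with $\tilde f = \int_{\mathrm{ex}\,\mathcal{K}} k \, d\mu_f(k)$. That $\mu_f$ is in fact a probability measure follows by testing this barycenter identity against the continuous linear evaluation functional $k \mapsto k(x_0)$: because $\mathcal{K} \subseteq \mathcal{S}^+_1$, every $k$ in the support satisfies $k(x_0) = 1$, so $1 = \tilde f(x_0) = \int_{\mathrm{ex}\,\mathcal{K}} k(x_0)\, d\mu_f(k) = \mu_f(\mathrm{ex}\,\mathcal{K})$.

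The substantive step is to locate the support of $\mu_f$, and this is exactly where the earlier lemmas enter. Since $\tilde f$ is harmonic, I would apply \Cref{harmEx} with the trivial subgroup $R = \{e\}$; as every function is $\{e\}$-invariant we have $\mathcal{K}^{\{e\}} = \mathcal{K}$, so the Choquet measure considered in that lemma is precisely our $\mu_f$. The lemma then gives that $\{k \in \mathrm{ex}\,\mathcal{K} \mid Hk \neq 0\}$ is $\mu_f$-null, i.e. $\mu_f$ is concentrated on the harmonic extreme points of $\mathcal{K}$. Invoking \Cref{thm1}(b), which states that every harmonic extreme point of $\mathcal{K}$ is multiplicative, upgrades this to: $\mu_f$ gives full mass to the set of harmonic and multiplicative elements of $\mathcal{K}$.

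Finally I would rescale, multiplying the barycenter identity by $f(x_0)$ to obtain $f = f(x_0)\int_{\mathcal{K}} k \, d\mu_f$, where $\mu_f$ is regarded as a Borel measure on all of $\mathcal{K}$ by extending it by zero off $\mathrm{ex}\,\mathcal{K}$ (legitimate since in the metrizable case $\mathrm{ex}\,\mathcal{K}$ is a $G_\delta$, hence Borel, subset). I do not expect a serious obstacle, since the genuine analytic and algebraic content is carried entirely by \Cref{harmEx} and \Cref{thm1}; the only points demanding care are the verification of the Choquet hypotheses, in particular metrizability, which rests on countability of $X$, and the small bookkeeping observation that the Choquet measure for $\mathcal{K}^{\{e\}}$ coincides with that for $\mathcal{K}$ so that \Cref{harmEx} applies verbatim.
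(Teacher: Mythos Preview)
Your proposal is correct and follows essentially the same approach as the paper's proof: normalize $f$ to land in $\mathcal{K}$, apply Choquet's theorem, invoke \Cref{harmEx} to concentrate the measure on harmonic extreme points, and then \Cref{thm1}(b) to obtain multiplicativity. You simply spell out a few details the paper leaves implicit (positivity of $f(x_0)$, metrizability via countability of $X$, the identification $\mathcal{K}^{\{e\}}=\mathcal{K}$, and that $\mu_f$ is a probability measure).
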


In order to prove \Cref{thm1} we first make some simple group theoretic observations.
\begin{mydef}
For a normal subgroup $R \subseteq G$, consider the natural projection $\pi : G \to G/R$ and the centre $Z(G/R)$ of the factor group. We define
$$Q(R) \defeq \pi^{-1}(Z(G/R)).$$
\end{mydef}

We note that $R \subseteq Q(R)$ since $R = \ker \pi$, and $Q(R)$ is a normal subgroup of $G$ as $Z(G/R)$ is normal and $\pi$ is a group homomorphism. Moreover, we obtain the following lemma.
\begin{lemma}\label{Qcomm}
Let $f \in \mathcal{K}^R$, $q \in Q(R)$ and $g \in G$. Then 
$$T_q T_gf = T_g T_qf.$$
\begin{proof}
Since $\pi(q) \in Z(G/R)$ we have $\pi([q,g]) = \pi(q)^{-1} \pi(g)^{-1} \pi(q) \pi(g) =  e_{G /R}$ and hence $[q,g] \in R$, so the identity
$$ T_{qg}f= T_{gq}T_{[q,g]}f = T_{gq}f$$
follows from $R$-invariance of $f$.
\end{proof}
\end{lemma}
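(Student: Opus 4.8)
The plan is to reduce the claimed commutation of the two shifts to a single application of the $R$-invariance of $f$, via a group-theoretic rearrangement relating $qg$ and $gq$. First I would record the fundamental compositional property of the shift operators, namely that $T_g T_h = T_{gh}$ for all $g,h \in G$; this follows at once from the definition $T_g f(x) = f(g^{-1}x)$, since $T_g T_h f(x) = (T_h f)(g^{-1}x) = f(h^{-1}g^{-1}x) = f((gh)^{-1}x)$. In these terms the assertion $T_q T_g f = T_g T_q f$ is exactly the statement $T_{qg} f = T_{gq} f$, so everything is recast as a comparison of two single shifts applied to $f$.

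Next I would exploit the hypothesis $q \in Q(R)$. Writing $\pi : G \to G/R$ for the natural projection and using that $\pi(q) \in Z(G/R)$ is central, the commutator satisfies $\pi([q,g]) = \pi(q)^{-1}\pi(g)^{-1}\pi(q)\pi(g) = e_{G/R}$, so $[q,g] \in \ker\pi = R$. The key algebraic identity is then
$$qg = gq\,[q,g],$$
which is immediate once one expands $[q,g] = q^{-1}g^{-1}qg$ and cancels.

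With these two ingredients in hand the argument concludes in essentially one line: combining the composition rule with the identity above gives
$$T_{qg} f = T_{gq}\,T_{[q,g]} f,$$
and since $[q,g] \in R$ while $f$ is $R$-invariant, we have $T_{[q,g]} f = f$, whence $T_{qg} f = T_{gq} f$, which is the desired equality.

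There is no serious obstacle here; the statement is fundamentally a bookkeeping lemma that isolates the role of centrality modulo $R$. The only points demanding care are the conventions: one must check that the shift defines a \emph{left} action (so that $T_q T_g = T_{qg}$, not $T_{gq}$) in a way that is consistent with the commutator convention $[q,g] = q^{-1}g^{-1}qg$, since a mismatch would flip the roles of $qg$ and $gq$. One should also confirm that it is precisely the $R$-invariance of $f$, and nothing stronger, that is being invoked, so that the conclusion genuinely holds for every $f \in \mathcal{K}^R$.
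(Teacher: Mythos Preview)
Your proposal is correct and follows essentially the same approach as the paper's proof: both reduce the claim to $T_{qg}f = T_{gq}f$, observe that $[q,g] \in R$ because $\pi(q)$ is central in $G/R$, and then use the factorisation $T_{qg}f = T_{gq}T_{[q,g]}f$ together with $R$-invariance of $f$. Your version simply spells out the composition rule $T_gT_h = T_{gh}$ and the identity $qg = gq[q,g]$ more explicitly, but the argument is the same.
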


We now start the main part of the proof of \Cref{thm1}, for which we need two key lemmas. The first, \Cref{lem1}, states, that the harmonic extreme points
of $\mathcal{K}^R$ are $Q(R)$-multiplicative for any normal subgroup $R$. Then the second one, \Cref{lem2}, uses the first lemma and states that the harmonic extreme points are even invariant under elements $q \in Q(R)$ which are commutators. These two lemmas are analogous to \cite[Lemma 1 and Lemma 2]{margulis} and  \cite[Lemma 6.3 (i) and (ii)]{linPinchover}.

With these at hand, the idea of the proof is the following: We know that any harmonic $f \in \mathrm{ex} \: \mathcal{K}^R$ is also invariant under commutators in $Q(R)$. Then Choquet's theorem together with \Cref{harmEx} yields that this is true for all $f \in \mathcal{K}^R$, not just for extreme points. This shows $\mathcal{K}^R \subseteq \mathcal{K}^{R'}$, where $R'$ is the subgroup generated by commutators in $Q(R)$. Taking $R = G_{n+1}$, an element of the lower central series of $G$, we then find that $G_n \subseteq R'$. By an inductive argument, it follows that $\mathcal{K}^{G_n} \subseteq \mathcal{K}^{G_1}$ for all $n \in \N$. Finally, nilpotency of the group states that there exists an $n \in \N$ such that $G_n = \{e\}$. Combining this with the fact that $G_1 = [G,G]$ we conclude that
$$\mathcal{K} = \mathcal{K}^{[G,G]}.$$
Applying \Cref{lem1} one more time, we also obtain that the harmonic $f \in \mathrm{ex}\:\mathcal{K}$ are multiplicative with respect to $Q([G,G]) = G$.

\begin{lemma}\label{lem1}
Let $R$ be a normal subgroup of $G$. Then every harmonic $k \in \mathrm{ex} \: \mathcal{K}^R$ is $Q(R)$-multiplicative.

\begin{proof}
Let $k \in \mathrm{ex} \: \mathcal{K}^R$ be harmonic and $q \in Q(R)$. For all $r \in R$, we have
$$T_rT_qk = T_qT_rk = T_qk$$
by \Cref{Qcomm}, so $T_qk$ is $R$-invariant. Since the group action is cocompact, choose a fundamental domain $V \subseteq X$ such that $GV = X$. Then, using the Harnack inequality, we find $C > 0$ such that $T_gk(v) > CT_gk(q^{-1}v)$ for all $v \in V$ and $g \in G$. Then, together with \Cref{Qcomm} we obtain
\begin{align*}
    k(g^{-1}v) &= T_{g}k(v) > CT_{g}k(q^{-1}v) \\
        &= CT_{g}T_qk(v) = CT_qT_gk(v) = CT_qk(g^{-1}v)
\end{align*} for all $v \in V$ and $g \in G$, which implies $k > CT_qk$ as $GV = X$.

From this it follows that the auxilliary function $s \defeq k - CT_qk$ is strictly positive. Moreover, $s$ is harmonic and $R$-invariant as both $k$ and $T_qk$ are. Then the normalised functions $k(q^{-1}x_0)^{-1}T_qk$ and $s(x_0)^{-1}s$ are elements of $\mathcal{K}^R$ and
$$k = Ck(q^{-1}x_0)\frac{T_qk}{k(q^{-1}x_0)} + s(x_0)\frac{s}{s(x_0)}$$
is a convex combination, as $Ck(q^{-1}x_0) + s(x_0) = k(x_0) = 1$. Since $k$ is an extreme point it follows that $k = k(q^{-1}x_0)^{-1}T_qk$ and thus
$$T_qk = k(q^{-1}x_0)k,$$
i.e. $k$ is $Q(R)$-multiplicative with $\gamma_k(q) = k(qx_0)$.
\end{proof}
\end{lemma}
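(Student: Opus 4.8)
The plan is to prove that any harmonic extreme point $k \in \mathrm{ex}\:\mathcal{K}^R$ is $Q(R)$-multiplicative by exploiting the defining property of extreme points, namely that $k$ cannot be written as a nontrivial convex combination of two distinct elements of $\mathcal{K}^R$. First I would fix such a $k$ and an arbitrary $q \in Q(R)$, and consider the shifted function $T_qk$. The key group-theoretic input is \Cref{Qcomm}: since $[q,g] \in R$ for every $g \in G$ and $k$ is $R$-invariant, the shift $T_q$ commutes with every $T_g$ when applied to $k$. This immediately gives that $T_qk$ is again $R$-invariant (for $r \in R$ one has $T_rT_qk = T_qT_rk = T_qk$), and since shifts preserve harmonicity under a $G$-invariant operator, $T_qk$ is also harmonic. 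So $T_qk$ lives in the same cone, and after normalising it sits in $\mathcal{K}^R$.

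The crucial step is to manufacture a concrete convex decomposition of $k$ that forces rigidity. Here I would invoke the uniform Harnack inequality, \Cref{harnack}: applied to the superharmonic function $k$ on a fundamental domain $V$, it furnishes a constant $C > 0$, uniform in $g \in G$, with $T_gk(v) > C\,T_gk(q^{-1}v)$ for all $v \in V$. Translating this through the commutation relation $T_qT_g k = T_gT_q k$ and using $GV = X$, I would upgrade this to the pointwise strict inequality $k > C\,T_qk$ on all of $X$. This positivity is exactly what lets the auxiliary function $s := k - C\,T_qk$ be a genuine (strictly positive, harmonic, $R$-invariant) competitor. The subtle point that makes the Harnack step the main obstacle is ensuring the constant is independent of $g$, since the decomposition must hold simultaneously at every point of $X$, not just on $V$; this uniformity is precisely why the group-invariant version of Harnack was set up earlier, and I would lean on it heavily rather than re-deriving anything.

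With $s > 0$ in hand, the endgame is purely convex-geometric. I would normalise both $T_qk$ and $s$ at $x_0$ and write
\begin{equation*}
k = C\,k(q^{-1}x_0)\cdot\frac{T_qk}{k(q^{-1}x_0)} + s(x_0)\cdot\frac{s}{s(x_0)},
\end{equation*}
checking that the coefficients $C\,k(q^{-1}x_0)$ and $s(x_0)$ are positive and sum to $k(x_0) = 1$, so this is a bona fide convex combination of two elements of $\mathcal{K}^R$. Extremality of $k$ then forces $k$ to coincide with the normalised $T_qk$, yielding $T_qk = k(q^{-1}x_0)\,k$. Reading off the proportionality constant gives the character $\gamma_k(q) = k(qx_0)$, and since $q \in Q(R)$ was arbitrary this establishes $Q(R)$-multiplicativity. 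The only loose ends to verify are that the two components are distinct (which is automatic unless the decomposition is trivial, a case that still yields the desired conclusion) and that the character is indeed a homomorphism on $Q(R)$, which follows from iterating the relation $T_qk = k(q^{-1}x_0)k$.
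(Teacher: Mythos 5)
Your proposal is correct and follows essentially the same route as the paper's proof: commutation via \Cref{Qcomm}, the uniform Harnack constant over $g \in G$ to get $k > C\,T_qk$, the auxiliary function $s = k - C\,T_qk$, and the convex decomposition forced apart by extremality. The one "loose end" you flag about the two components being distinct is not actually needed, since the definition of an extreme point forces both components of any nontrivial convex combination to equal $k$ regardless of whether they are distinct.
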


\begin{lemma}\label{lem2}
Let $R$ be a normal subgroup of $G$, then for every $q \in Q(R)$ which is a commutator in $G$ we have 
$$T_qf = f$$
for each harmonic $f \in \mathcal{K}^R$.

\begin{proof}
Let $q \in Q(R)$ such that $q = [g,l]$ with $g,l \in G$.
We first show the statement for the extreme points, so let $k \in \mathrm{ex}\: \mathcal{K}^R$. Since we know that $\pi([g,l]) = \pi(q) \in Z(G/R)$, we get
$$\pi(q^n) = \pi([g,l]^n) = [\pi(g), \pi(l)]^n = [\pi(g), \pi(l)^n] = \pi([g,l^n])$$
and hence
$$[g,l^n] = q^nr_n$$
with $r_n \in R$ for  every $n \in \N$.
The function $k$ is $Q(R)$-multiplicative with character $\gamma_k$ by \Cref{lem1} and from \Cref{Qcomm} it follows that
$$T_qT_g^nk = T_g^nT_qk = T_g^n\gamma_k(q^{-1})k$$
for each $n \in \N$.
Therefore, for every element $h$ of the set 
$$\mathcal{C}_0 = \{T_g^nk \mid n \in \N_0\},$$
 it follows that $h$ is $Q(R)$-multiplicative with the same character $\gamma_k$. Moreover, every $h \in \mathcal{C}_0$ is $R$-invariant, as
$$T_rh = T_rT_g^nk = T_g^nT_{g^{-n}rg^n}k = T_g^nk = h$$
for all $r \in R$ since $R$ is normal and $k$ is $R$-invariant. Indeed, we have $Q(R)$-multiplicativity and $R$-invariance for each element of the closed cone $\mathcal{C}$ generated by $\mathcal{C}_0$, since these properties are retained by positive linear combinations and limits. Setting $\mathcal{C}_1 \defeq \{h \in \mathcal{C} \mid h(x_0) = 1\}$ we can see that $\mathcal{C}_1$ is convex and compact, as $\mathcal{C}_1 \subseteq \mathcal{K}^R$. Now the continuous endomorphism
$$\mathcal{C}_1 \to \mathcal{C}_1, \quad h \mapsto h(g^{-1}x_0)^{-1}T_gh$$
must have a fixed point $h_1 \in \mathcal{C}_1$ by the Schauder-Tychonoff fixed point theorem \cite{schauderTych}, and for all $n \in \N$ we obtain 
\begin{align*}
    T_gT_l^nh_1 &= T_l^nT_g T_{[g, l^n]}h_1 \\
                &= T_l^n T_g T_q^n T_{r_n}h_1 \\
                &= T_l^n T_g \gamma(q^{-1})^n h_1 \\
                &= \gamma(q)^{-n} h_1(g^{-1}x_0) T_l^n h_1,
\end{align*}
using $[g,l^n] = q^nr_n$ and the fact that $h_1$ is $R$-invariant and $Q(R)$-multiplicative.
Evaluating this identity at $x_0$ and rearranging yields
$$ \frac{T_l^nh_1(g^{-1}x_0)}{T_l^nh_1(x_0)} = \gamma(q)^{-n}h_1(g^{-1}x_0)$$
for all $n \in \N$. In this equation, the left hand side and its inverse are bounded by the Harnack inequality, so it follows that $\gamma(q) = k(qx_0) = 1$ and hence $T_qk = k$.

Then, by Choquet's theorem and \Cref{harmEx}, we can extend the $q$-invariance to arbitrary $f \in \mathcal{K}^R$, as
$$f(qx) = \int\displaylimits_{k \in \mathrm{ex}\:\mathcal{K}} k(qx) \d\mu_f = \int\displaylimits_{k \in \mathrm{ex}\:\mathcal{K}} k(x) \d\mu_f = f(x).$$
This finishes the proof.
\end{proof}
\end{lemma}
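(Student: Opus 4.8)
The plan is to establish $T_q f = f$ first for the harmonic extreme points of $\mathcal{K}^R$ and then to propagate it to all harmonic $f \in \mathcal{K}^R$ via the integral representation. The second step is the easy one: once I know $T_q k = k$ for every harmonic $k \in \mathrm{ex}\,\mathcal{K}^R$, Choquet's theorem writes an arbitrary harmonic $f \in \mathcal{K}^R$ as $f = \int_{\mathrm{ex}\,\mathcal{K}^R} k\, d\mu_f(k)$, while \Cref{harmEx} guarantees that $\mu_f$ is concentrated on the harmonic extreme points; integrating the pointwise identity $k(qx) = k(x)$ then yields $f(qx) = f(x)$. Thus the whole difficulty sits in the extreme-point case, where, by \Cref{lem1}, $k$ is $Q(R)$-multiplicative with some character $\gamma_k$, and the task reduces to showing $\gamma_k(q) = 1$.

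For the extreme-point case I would first isolate the relevant group-theoretic fact. Writing $q = [g,l]$ and using that $\pi(q) \in Z(G/R)$ is central, the commutator identity for a central commutator gives $\pi([g,l^n]) = \pi(q)^n$, hence $[g,l^n] = q^n r_n$ with $r_n \in R$ for every $n \in \N$. This is the mechanism that will let powers of $l$ interact with $q$, and therefore with $\gamma_k(q)$, in a controlled way. The second, analytic ingredient is to manufacture a function that is simultaneously an eigenfunction of $T_g$. To that end I would form the orbit $\mathcal{C}_0 = \{T_g^n k \mid n \in \N_0\}$; by \Cref{Qcomm} each member is $Q(R)$-multiplicative with the same character $\gamma_k$, and by normality of $R$ each member is $R$-invariant. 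These properties pass to the closed cone $\mathcal{C}$ generated by $\mathcal{C}_0$ and hence to its normalized section $\mathcal{C}_1 = \{h \in \mathcal{C} \mid h(x_0)=1\}$, which is a compact convex subset of $\mathcal{K}^R$. Applying the Schauder--Tychonoff theorem to the continuous self-map $h \mapsto h(g^{-1}x_0)^{-1}T_g h$ then produces a fixed point $h_1$, i.e. $T_g h_1 = h_1(g^{-1}x_0)\,h_1$, which is still $R$-invariant and $Q(R)$-multiplicative with character $\gamma_k$.

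The decisive computation combines the two ingredients. Using $T_aT_b = T_{ab}$ and $gl^n = l^n g\,[g,l^n]$ together with $[g,l^n]=q^nr_n$, the $R$-invariance and $Q(R)$-multiplicativity of $h_1$, and the fixed-point relation, I expect to obtain
$$T_g T_l^n h_1 = \gamma_k(q)^{-n}\, h_1(g^{-1}x_0)\, T_l^n h_1$$
for every $n \in \N$. Evaluating at $x_0$ and dividing gives
$$\frac{T_l^n h_1(g^{-1}x_0)}{T_l^n h_1(x_0)} = \gamma_k(q)^{-n}\, h_1(g^{-1}x_0).$$
The point is now that the left-hand side is trapped in a fixed compact subinterval of $(0,\infty)$ independent of $n$: applying the uniform Harnack inequality (\Cref{harnack}) to the superharmonic function $h_1 \in \mathcal{S}^+$ with the shift $l^n \in G$ bounds this ratio below by $C_{g^{-1}x_0,x_0}$ and above by $C_{x_0,g^{-1}x_0}^{-1}$. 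Since $h_1(g^{-1}x_0)$ is a fixed positive number, the sequence $\gamma_k(q)^{-n}$ must therefore stay bounded away from $0$ and $\infty$, which forces $\gamma_k(q) = 1$ and hence $T_q k = k$.

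The main obstacle I anticipate is not any single estimate but the orchestration in the middle paragraphs: one must produce a single function $h_1$ that is at once $R$-invariant, $Q(R)$-multiplicative with the correct character, and a $T_g$-eigenfunction, and verify that the cone $\mathcal{C}$ is genuinely preserved under the normalized shift so that Schauder--Tychonoff applies. Once such an $h_1$ exists, the commutator relation $[g,l^n]=q^nr_n$ is exactly what converts the $n$-th power of $l$ into the factor $\gamma_k(q)^{-n}$, and the $n$-uniformity of the Harnack constant — precisely the feature that \Cref{harnack} was designed to deliver — closes the argument.
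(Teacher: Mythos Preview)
Your proposal is correct and follows essentially the same approach as the paper: the commutator identity $[g,l^n]=q^n r_n$, the orbit cone $\mathcal{C}_0=\{T_g^n k\}$ with its normalized section, the Schauder--Tychonoff fixed point $h_1$, the computation yielding the factor $\gamma_k(q)^{-n}$, and the Harnack bound forcing $\gamma_k(q)=1$, followed by the Choquet/\Cref{harmEx} extension, all match the paper's argument step for step.
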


We are now ready to prove \Cref{thm1} as outlined above.

\begin{proof}[Proof of \Cref{thm1}]
Consider the lower central series of $G$, denoted by $(G_n)_{n \in \N}$. For a fixed $n \in \N$, taking the natural projection $\pi :G \to G/G_{n+1}$ we have
$$G_n \subseteq \pi^{-1}(G_n/G_{n+1}) \subseteq \pi^{-1}(Z(G/G_{n+1})) = Q(G_{n+1}),$$
as $G_n/G_{n+1} \subseteq Z(G/G_{n+1})$. Moreover, we know that $G_n = [G, G_{n-1}]$ is generated by commutators which are also in $Q(G_{n+1})$. It follows by \Cref{lem2} that any harmonic and $G_{n+1}$-invariant $f \in \mathcal{K}$ is also $G_n$-invariant. Iterating this argument, we find that any such $f$ is invariant under $G_1 = [G,G]$. By nilpotency of $G$, we find $n \in \N$ such that $G_n = \{ e \}$, so every $f \in \mathcal{K}$ is $G_n$-invariant, and therefore $[G,G]$-invariant. Since the harmonic elements are dense in $\mathcal{K}$, we find
$$\mathcal{K} = \mathcal{K}^{[G,G]},$$
showing (a). Moreover, applying \Cref{lem1} one more time with $R = [G,G]$, we obtain that every harmonic extreme point of $\mathcal{K} = \mathcal{K}^{[G,G]}$ is $Q([G,G])$-multiplicative. Since $G/[G,G]$ is abelian it follows that $Q([G,G]) = G$, and therefore every harmonic extreme point of $\mathcal{K}$ is $G$-multiplicative, showing (b). 
\end{proof}

\subsection{Factoring by a normal subgroup}
We have seen in the previous subsection that every positive harmonic function is invariant under the commutator subgroup. Since factoring a group by its commutator subgroup yields an abelian group, it would be useful to have a statement which allows us to assume that $G$ is abelian in this setting without loss of generality. 
In this chapter we make this idea precise: We define the notion of a factored space $X_R$ and graph $(b_R, c_R)$ on which $G / R$ acts for any normal subgroup $R$. Crucially, this factoring retains many relevant properties of the graph, in particular the set of positive harmonic functions, provided every such function is $R$-invariant. Since this construction is made for a general subgroup $R$ we can also use it later to reduce to the case $G \cong \Z^d$ when the graph is locally finite.

\begin{mydef}\label{factorsDef}
Let $(b,c)$ be a graph on $X$ with a group action $G$ such that $b$ and $c$ are $G$-invariant. Let $R \subseteq G$ be a normal subgroup. Then we define
$$X_R := \{Rx \mid x \in X\}, \quad b_R : X_R \times X_R \to [0, \infty), \quad  c_R : X_R \to \R$$
with
$$b_R(Rx,Ry) := \sum_{r \in R} b(x, ry) \quad \text{ and } \quad c_R(Rx) := c(x).$$
Here $Rx$ denotes the set $\{rx \mid r \in R\}$ for every $x \in X$.
We call $X_R, b_R$ and $c_R$ the space, graph and potential \emph{factored by} $R$ respectively. Moreover, for $Rg \in G / R$ and $Rx \in X_R$ define
$$Rg \: Rx := R(gx) \in X_R.$$
Note that the definition of $b_R$ may result in diagonal terms $b_R(Rx,Rx) > 0$ as defining it this way makes some calculations simpler. However, this does affect the operator $H_{b_R,c_R}$. 
\end{mydef}

\begin{lemma}
The notions in \Cref{factorsDef} are well-defined. Moreover, $b_R$ defines a graph on $X_R$ and the multiplication of an element in $G /R$ with an element in $X_R$ defines a group action of $G /R$ on $X_R$.

\begin{proof}
For $b_R$ and $c_R$ we have to show that the definitions are independent of the representative. To this end, let $x_1, x_2, y_1, y_2 \in X$ such that $x_1 = r_xx_2$ and $y_1 = r_yy_2$ with $r_x, r_y \in R$. Then we have
$$b_R(Rx_1,Ry_1) = \sum_{r \in R} b(x_1, ry_1) = \sum_{r \in R} b(r_x^{-1}r_xx_2, r_x^{-1}rr_yy_2) = \sum_{r' \in R} b(x_2, r'y_2) = b_R(Rx_1,Ry_2),$$
where we used $G$-invariance of $b$ in the first step and the substitution $r' = r_x^{-1}rr_y$ in the second step. Moreover, for $c_R$ we get
$$c_R(Rx_1) = c(x_1) = c(r_xx_2) = c_R(Rx_2),$$
where we used $G$-invariance of $c$. To show that $b_R$ is finite and defines a graph on $X_R$ we have to show that the sum converges and that $b_R$ is locally summable which we will do with one calculation. Indeed, let $x \in X$, then
$$\deg_{b_R}(Rx) = \sum_{Ry \in X_R} b_R(Rx,Ry) = \sum_{Ry \in X_R} \sum_{r \in R} b(x,ry) = \sum_{y' \in X} b(x,y') = \deg_b(x),$$
which is finite as $b$ is a graph. We conclude that $b_R$ is a well-defined graph and $c_R$ is a well-defined potential. It remains to show that the multiplication operation from the group is a group action. Indeed, let $g_1, g_2 \in G$ with $g_1 = r_gg_2$ and $x_1,x_2 \in X$ with $x_1 = r_xx_2$. Then we have 
$$Rg_1 \: Rx_1 = Rg_1x_1 = R r_gg_2r_xx_2 = Rr_g \underbrace{g_2 r_xg_2^{-1}}_{\in R} g_2 x_2 = Rg_2x_2$$
showing the well-definedness of the group multiplication. Finally we see
$$Re \: Rx = Rx$$
and
$$(Rg Rh) \: Rx  = Rgh \: Rx = Rghx = Rg \: Rhx = Rg \: (Rh \: Rx)$$
for all $g,h \in G$ and $x \in X$, which shows that the multiplication defined this way is in fact a group action.
\end{proof}
\end{lemma}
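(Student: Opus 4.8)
The plan is to verify the four assertions separately: representative-independence of $b_R$, representative-independence of $c_R$, local summability of $b_R$ (so that $(b_R,c_R)$ is a graph), and well-definedness together with the axioms of the $G/R$-action. Each reduces to elementary bookkeeping with coset representatives, so the real work is to track where $G$-invariance of $(b,c)$ and normality of $R$ are used.

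For well-definedness of $b_R$ I would pick two representatives of each coset, say $x_1 = r_x x_2$ and $y_1 = r_y y_2$ with $r_x, r_y \in R$, and compute $b_R(Rx_1, Ry_1) = \sum_{r \in R} b(x_1, r y_1) = \sum_{r \in R} b(x_2, r_x^{-1} r r_y y_2)$, using $G$-invariance of $b$ in the form $b(r_x x_2, z) = b(x_2, r_x^{-1} z)$. Since left and right translation are bijections of a group, the substitution $r' = r_x^{-1} r r_y$ permutes $R$, so the sum is unchanged and equals $b_R(Rx_2, Ry_2)$; note this needs only that $R$ is a subgroup, not normality. Representative-independence of $c_R$ is immediate from $c(r_x x_2) = c(x_2)$.

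For the graph property I would compute $\deg_{b_R}(Rx) = \sum_{Ry \in X_R} \sum_{r \in R} b(x, ry)$ and reindex the double sum over cosets-and-$R$ as a single sum over $X$, obtaining $\sum_{y' \in X} b(x, y') = \deg_b(x) < \infty$. This one computation simultaneously shows that each inner sum converges and that $b_R$ is locally summable. The step that requires genuine care, and the main obstacle of the whole lemma, is this reindexing: the assignment $(Ry, r) \mapsto ry$ is a bijection onto $X$ precisely when the $R$-action is free, which I would make explicit (or assume, consistently with the fundamental-domain setup), since nontrivial point stabilizers would cause each point of $X$ to be counted with multiplicity equal to its stabilizer order and the equality would fail.

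Finally, for the group action I would first check that $Rg \: Rx = R(gx)$ is independent of the chosen representatives: writing $g_1 = r_g g_2$ and $x_1 = r_x x_2$, the element $g_1 x_1 = r_g (g_2 r_x g_2^{-1}) g_2 x_2$ lies in $R(g_2 x_2)$ because $g_2 r_x g_2^{-1} \in R$ by normality of $R$ — this is the one place where normality is indispensable. The identity axiom $Re \: Rx = Rx$ follows from $R(ex) = Rx$, and compatibility $(Rg \: Rh) \: Rx = Rg \: (Rh \: Rx)$ follows from $R(ghx) = Rg \: R(hx)$, both being immediate once well-definedness is in hand. Thus apart from the summation reindexing in the graph step, every part is a routine verification, the whole difficulty lying in handling representatives consistently.
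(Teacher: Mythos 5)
Your proof follows the paper's own argument essentially step for step: the same representative-independence computations for $b_R$ and $c_R$ (including the correct observation that only the subgroup property, not normality, is needed there), the same single degree computation for the graph property, and the same use of normality of $R$ for well-definedness of the $G/R$-action. The one point where you diverge is your caveat on the reindexing step, and you are right to insist on it — this is a genuine gap in the paper's proof, not pedantry. Fixing a representative $y$ of each orbit, each point $y' \in Ry$ is attained by exactly $\#\{r \in R : ry = y\}$ values of $r$, so
$$\sum_{Ry \in X_R} \sum_{r \in R} b(x,ry) \;=\; \sum_{Ry \in X_R} \#\{r \in R : ry = y\} \sum_{y' \in Ry} b(x,y'),$$
which equals $\deg_b(x)$ only when the relevant $R$-stabilisers are trivial, i.e. when $R$ acts freely. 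Without that, the lemma can genuinely fail: take $X = \Z$ with the standard line graph, $G = \Z \times F$ with $F$ an infinite abelian group acting through the projection onto $\Z$, and $R = \{0\} \times F$; then $b_R(Rx,Ry) = \sum_{f \in F} b(x,y) = \infty$ whenever $x \sim y$, so $b_R$ is not even finite-valued, let alone locally summable. This is relevant downstream as well: in the proof of \Cref{thm3} the subgroup $R$ produced by \Cref{lemZd} is chosen to \emph{contain} all stabilisers, which makes the induced $G/R$-action on $X_R$ free but does not make the $R$-action on $X$ free, so the example above satisfies all hypotheses used there. A repair slightly cleaner than your suggestion of adding a freeness hypothesis is to redefine $b_R(Rx,Ry) := \sum_{y' \in Ry} b(x,y')$ as a sum over the orbit rather than over the group; this coincides with the paper's definition exactly in the free case, and with it your degree computation, and every later use in \Cref{factorsLem}, goes through with no assumption on stabilisers.
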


\begin{lemma}\label{factorsLem}
On the spaces $C(X)$ and $C(X_R)$, consider the topology of pointwise convergence. Recall that the stabiliser of an element $x \in X$ with respect to the group action is the set $\mathrm{Stab}(x) = \{ g \in G \mid gx = x\}$. The factored space $X_R$, graph $b_R$ and potential $c_R$ satisfy the following properties.
\begin{enumerate}[label=(\alph*)]
    \item The graph $b_R$ and the potential $c_R$ are $G / R$-invariant.
    \item If the group action of $G$ on $X$ is cocompact with a finite set $V \subseteq X$, then the group action of $G / R$ on $X_R$ is cocompact with the finite set $V_R = \{Rv \mid v\in V\}$
    \item If $b$ is connected, then $b_R$ is also connected.
    \item If $b$ is locally finite, then $b_R$ is also locally finite.
    \item If $R$ includes all stabilisers of elements $x \in X$, then the action of $G / R$ on $X_R$ is free.
    \item For a potential $c' := c - \lambda$ we have $c'_R = c_R - \lambda$ for any $\lambda \in \R$.
    \item Let $C(X)^R$ denote the set of $R$-invariant functions on $X$. Then the map 
    $$\Phi : C(X)^R \to C(X_R), \quad \Phi(f)(Rx) = f(x)$$ 
    is a well defined homeomorphism and restricts to a homeomorphism from $C(X)^R~\cap~\mathrm{Dom}(H_{b,c})$ to $\mathrm{Dom}(H_{b_R,c_R})$. Morever, $H_{b,c}$ maps elements from $C(X)^R$ into $C(X)^R$ and the following diagram commutes:
    \vspace{10pt}
         \begin{center}
             \begin{tikzpicture}
              \matrix (m)
                [
                  matrix of math nodes,
                  row sep    = 3em,
                  column sep = 4em
                ]
                {
                  C(X)^R \cap \mathrm{Dom}(H_{b,c})            
                  & C(X_R) \cap \mathrm{Dom}(H_{b_R,c_R}) \\
                  C(X)^R & C(X_R) \\
                };
              \path
                (m-1-1) edge [->] node [left] {$H_{b,c}$} (m-2-1)
                (m-1-1.east |- m-1-2)
                  edge [->] node [above] {$\Phi$} node [below] {$\cong$} (m-1-2)
                (m-1-2) edge [->] node [right] {$H_{b_R,c_R}$} (m-2-2)
                (m-2-1.east) edge [->] node [above] {$\Phi$} node [below] {$\cong$} (m-2-2);
            \end{tikzpicture}
          \end{center}
    \item If every positive harmonic function with respect to $H_{b,c}$ is $R$-invariant, then the map $\Phi$ from (g) restricts to a homeomorphism
    $$\Phi|_{\mathcal{H}^+_{b,c}} : \mathcal{H}^+_{b,c} \to \mathcal{H}^+_{b_R, c_R}.$$
    \end{enumerate}

    \begin{proof}
        \:
        \begin{enumerate}[label=(\alph*)]
        \item Let $g, \in G$ and $x,y \in X$. Then
        \begin{align*}
            b_R(Rgx, Rgy) &= \sum_{r \in R} b(rgx, rgy) = \sum_{r\in R} b(g^{-1}rgx, g^{-1}rgy) \\
            &= \sum_{\underset{r\in R}{r' = g^{-1}rg}} b(r'x, r'y) = \sum_{r \in R} b(rx,ry) = b_R(Rx,Ry),
        \end{align*}
        as $b$ is $G$-invariant and the conjugation $r \mapsto g^{-1}rg$ is a bijection on $R$. Moreover, for $c_R$ we have
        $$c_R(Rgx) = c(gx) = c(x) = c_R(Rx)$$
        as $c$ is $G$-invariant.
        \item Assume that $GV = X$, then we have
        \begin{align*} &(G/ R) V_R = \{Rg Rv \mid Rg \in G / R, v \in V\} \\
        & \: \: = \{ Rgv \mid g \in G, v \in V\} 
        = \{Rx \mid x \in GV\} = X_R,\end{align*}
        showing cocompactness of the group action.
        \item Assume that $b$ is connected and let $x,y \in X$. There exists a path $x= z_1 \sim \dots \sim z_n = y$ connecting $x$ and $y$ in $X$ via $b$. Observe that whenever $b(z_i,z_{i+1}) > 0$, then 
        $$b_R(Rz_i, Rz_{i+1}) = \sum_{r \in R} b(z_i, rz_{i+1}) > 0$$
        as $e \in R$. Hence the path $Rx = Rz_1 \sim \dots \sim Rz_n = Ry$ connects $Rx$ and $Ry$ in $X_R$ via $b_R$, which shows the desired connectedness.
        
        \item Assume that $b$ is locally finite. Let $x \in X$. Then the set of neighbours of $Rx$ with respect to $b_R$ is given by
        $$\{ Ry \in X_R \mid b_R(Rx, Ry) > 0 \} = \{Ry \in X_R \mid b(x,ry) > 0 \text{ for some }r \in R\},$$
        which must be finite, as there are only finitely many elements $ry$ with $r \in R, y \in X$ such that $b(x,ry)>0$ due to local finiteness of $b$. This shows that $b_R$ is locally finite.
        \item Assume that $R$ includes all stabilisers. Let $Rg \in G / R$ and $Rx \in X_R$ with $RgRx = Rgx = Rx$. It follows that there exists $r \in R$ such that $rg \in \mathrm{Stab}(x) \subseteq R$, and hence $Rg = Rrg = Re$ follows.
        \item For $\lambda \in \R$ and $x \in X$ we have $c'_R(Rx) = c'(x) = c(x) - \lambda = c_R(Rx) - \lambda$.
        
        \item It is clear that $\Phi$ is well-defined as the functions in its domain are $R$-invariant. Moreover, it is easy to see that for $f' \in C(X_R)$ putting $\Phi^{-1}(f')(x) = f'(Rx)$ defines an inverse of $\Phi$. Continuity of $\Phi$ and its inverse also follows immediately from the definitions.
        Furthermore, we see that for every $f \in C(X)^R$ we have
        \begin{align*}\sum_{Ry \in X_R} b_R(Rx,Ry) |\Phi(f)(Ry)| = \sum_{Ry \in X_R} \sum_{y' \in Ry}b(x,y')|f(y')| 
        = \sum_{y \in X} b(x,y)|f(y)| ,\end{align*}
        so $f \in C(X)^R \cap \mathrm{Dom}(H_{b,c})$ is equivalent to $\Phi(f) \in \mathrm{Dom}(H_{b_R,c_R})$. Now let $f \in C(X)^R$ and $r \in R$. Then by \Cref{HGinv} we know that $H_{b,c}$ is $G$-invariant, hence
        $$T_rH_{b,c}f = H_{b,c}T_rf = H_{b,c}f,$$
        so $H_{b,c}$ maps $C(X)^R$ to itself. Finally, for $f \in C(X)^R$ and $x \in X$, we have
        \begin{align*}\Phi^{-1}(H_{b_R,c_R}\Phi(f))(x) &= \sum_{Ry \in X_R} b_R(Rx,Ry)(\Phi(f)(Rx) - \Phi(f)(Ry)) + c(Rx)\Phi(f)(Rx) \\
        &= \sum_{Ry \in X_R} \sum_{y' \in Ry} b(x,y')(f(x) - f(y')) + c(x)f(x) \\
        &= \sum_{y \in X} b(x,y)(f(x) - f(y)) + c(x)f(x) \\
        &= H_{b,c}f(x),
        \end{align*}
        showing that the diagram commutes.
        \item Assume that $\mathcal{H}^+_{b,c} \subseteq C(X)^R$. Then we may restrict $\Phi$ to $\mathcal{H}_{b,c}^+$. We observe that $\Phi$ and $\Phi^{-1}$ map positive functions to positive functions. Moreover, the commutativity of the diagram in (f) yields that
        $$H_{b,c}\Phi(f) = \Phi(H_{b,c}f) = \Phi(0) = 0$$
        for any harmonic $f \in C(X)^R$, hence $\Phi|_{\mathcal{H}^+_{b,c}}$ maps into $\mathcal{H}_{b_R,c_R}$. Similarly, one can see that $\Phi^{-1}$ maps harmonic functions into harmonic functions, so the restriction is also surjective, which shows that it is a homeomorphism together with (g).
        \end{enumerate}
    \end{proof}
\end{lemma}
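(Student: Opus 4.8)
The plan is to dispatch parts (a)--(f) by direct computation from the definitions of $b_R$, $c_R$ and the $G$-invariance of $(b,c)$, then to establish the structural heart (g) via the map $\Phi$ and its commuting diagram, and finally to obtain (h) as a short consequence.

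For (a) I would compute $b_R(Rgx,Rgy) = \sum_{r\in R} b(gx,rgy)$ and apply $G$-invariance to rewrite $b(gx,rgy) = b(x,g^{-1}rgy)$; since $R$ is normal, conjugation $r\mapsto g^{-1}rg$ is a bijection of $R$, so reindexing the sum returns $b_R(Rx,Ry)$, while the $c_R$ claim is immediate. Part (b) follows by unwinding $(G/R)V_R = \{R(gv)\} = \{Rx : x\in GV\} = X_R$. For connectedness (c) the key observation is that the $r=e$ term gives $b_R(Rz_i,Rz_{i+1}) \geq b(z_i,z_{i+1})$, so any path in $X$ projects to a path in $X_R$; local finiteness (d) transfers because only finitely many $ry$ can be neighbours of $x$ in $X$, hence only finitely many orbits $Ry$ are neighbours of $Rx$. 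Freeness (e) is the stabiliser argument: $Rg\,Rx = Rx$ forces $rgx = x$ for some $r\in R$, so $rg\in\mathrm{Stab}(x)\subseteq R$ and $Rg = Re$. Part (f) is immediate from $c'_R(Rx)=c'(x)=c(x)-\lambda$.

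The substantive step is (g). I would first verify that $\Phi$ is a homeomorphism with inverse $\Phi^{-1}(f')(x)=f'(Rx)$: well-definedness uses $R$-invariance of the domain, and continuity in both directions is clear since evaluation at a point depends only on the value at a single representative. Next I would match the domains through the summability identity $\sum_{Ry\in X_R} b_R(Rx,Ry)\,|\Phi(f)(Ry)| = \sum_{y\in X} b(x,y)\,|f(y)|$, which holds because the fibres $\{y' : Ry'=Ry\}$ partition $X$. That $H_{b,c}$ preserves $C(X)^R$ follows from $G$-invariance of the operator, itself a consequence of \Cref{HGinv}, via $T_r H_{b,c}f = H_{b,c}T_rf = H_{b,c}f$ for $r\in R$. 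Finally I would expand $\Phi^{-1}(H_{b_R,c_R}\Phi(f))(x)$ and collapse the double sum over $R$-orbits back into the single sum defining $H_{b,c}f(x)$, which yields the commuting diagram.

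With (g) in hand, (h) is short. Under the hypothesis $\mathcal{H}^+_{b,c}\subseteq C(X)^R$ the restriction of $\Phi$ to $\mathcal{H}^+_{b,c}$ makes sense, and the commuting diagram gives $H_{b_R,c_R}\Phi(f) = \Phi(H_{b,c}f) = 0$, so $\Phi$ sends harmonic functions to harmonic functions while visibly preserving positivity; thus it maps $\mathcal{H}^+_{b,c}$ into $\mathcal{H}^+_{b_R,c_R}$. Applying the same reasoning to $\Phi^{-1}$ shows it sends $\mathcal{H}^+_{b_R,c_R}$ into the $R$-invariant positive harmonic functions on $X$, which by hypothesis are precisely $\mathcal{H}^+_{b,c}$; hence the restriction is a bijection, and being the restriction of a homeomorphism onto its image it is itself a homeomorphism. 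I expect the only real care to be needed in (g): ensuring all the rearrangements of sums over $R$-orbits are legitimate (guaranteed here by nonnegativity and the summability identity) and that $\Phi^{-1}$ genuinely lands back in $\mathrm{Dom}(H_{b,c})$.
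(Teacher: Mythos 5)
Your proposal is correct and follows essentially the same route as the paper: direct computations for (a)--(f) using $G$-invariance and normality of $R$, the summability identity and orbit-decomposition of sums for (g), and the commuting diagram plus positivity for (h). The only cosmetic difference is in (a), where you expand $b_R(Rgx,Rgy)$ exactly as the definition dictates ($\sum_{r\in R} b(gx,rgy)$) before reindexing, which is if anything slightly cleaner than the paper's version of the same argument.
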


This construction combined with \Cref{thm1} now allows us to study graphs with abelian group actions instead of general nilpotent group actions. This is achieved by factoring the group, space and graph by the commutator subgroup. This way, we retain all the information about the positive harmonic functions when the operator is invariant under the group shift. 

\section{Structural results for locally finite graphs}
In this section, we restrict ourselves to locally finite graphs. In this case we can characterise the extreme points of $\mathcal{K}$ as exactly the multiplicative elements. Moreover, we will study the structure of the set of positive multiplicative generalised eigenfunctions of the Schrödinger operator, which gives us some insight into the set of all positive generalised eigenfunctions. These results can be seen as a discrete analogue to the results in \cite[sections 4-6]{linPinchover} on differential operators on manifolds.

\subsection{Characterizing the extreme points of $\mathcal{K}$}
We have seen in the previous section that every harmonic extreme point of $\mathcal{K}$ is multiplicative. Our aim here is to show that the converse holds as well when the graph is locally finite. Denote the set of positive, normalised multiplicative and harmonic functions on a locally finite graph $(b,c)$ by

$$\mathcal{M} = \{f \in \mathcal{K} \mid f \text{ is multiplicative}\}.$$

Note that this set only contains harmonic functions, since $\mathcal{K}$ only contains harmonic functions by local finiteness of the graph. Indeed, consider a convergent sequence $(f_n)_{n\in \N}~\in~\mathcal{H}^+$ with $f_n(x_0) =1$. Then taking the limit outside of the finite sum yields

\begin{align*}
    H_{b,c}(\lim_{n \to \infty}f_n)(x) &= \sum_{y \sim x} b(x,y) \lim_{n \to \infty}(f_n(x) - f_n(y)) + c(x)\lim_{n \to \infty}f_n(x) \\
    &= \lim_{n \to \infty}\left( \sum_{y \sim x} b(x,y)(f_n(x) - (f_n(y)) + c(x) f_n(x)\right) = \lim_{n \to \infty}H_{b,c}f_n(x) = 0,
\end{align*}
hence $\{f \in \mathcal{H}^+ \mid f(x_0) = 1 \}$  is closed.
Moreover, for a sequence $f_n \in \mathcal{M}$ converging pointwise to $f \in \mathcal{K}$ we have

$$T_gf = \lim_{n \to \infty} T_gf_n = \lim_{n \to \infty} f_n(gx_0)f_n = f(gx_0)f,$$
so $f$ is also multiplicative and therefore an element of $\mathcal{M}$. It follows that $\mathcal{M}$ is compact as a closed subset of the compact $\mathcal{K}$. We may now formulate the second main theorem of the paper. As a corollary we also obtain a Liouville property if the potential vanishes. 

\begin{theorem}\label{MKisexK}
Let $(b,c)$ be a locally finite, connected graph. Let $G$ be a nilpotent group acting cocompactly on $X$ such that $H_{b,c}$ is $G$-invariant. Then
$$\mathcal{M} = \mathrm{ex} \: \mathcal{K}.$$
\end{theorem}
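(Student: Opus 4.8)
The inclusion $\mathrm{ex}\:\mathcal{K} \subseteq \mathcal{M}$ is essentially free and is where \Cref{thm1} does the work: since the graph is locally finite, every element of $\mathcal{K}$ is harmonic (as observed just before the theorem), so every extreme point is harmonic, and \Cref{thm1}(b) then forces it to be multiplicative, i.e.\ to lie in $\mathcal{M}$. All the effort goes into the reverse inclusion $\mathcal{M} \subseteq \mathrm{ex}\:\mathcal{K}$, so I would fix $f \in \mathcal{M}$ with character $\gamma_f$ and show that $f$ cannot be written as a nontrivial convex combination of two distinct elements of $\mathcal{K}$.

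The central device is a ground-state (Doob) transform. Using that $f$ is harmonic and strictly positive (\Cref{harCor1}), I would introduce the graph $\tilde b(x,y) \defeq b(x,y)\,f(y)/f(x)$ with vanishing potential, and check by a direct rearrangement of the harmonicity identity $\deg(x)f(x)=\sum_y b(x,y)f(y)$ that for any harmonic $h$ the quotient $u \defeq h/f$ is harmonic for $H_{\tilde b, 0}$, while constants are always $H_{\tilde b,0}$-harmonic. The graph $\tilde b$ has the same edges as $b$, so it is again connected and locally finite, and the crucial point is $G$-invariance: multiplicativity gives $f(gx) = \gamma_f(g) f(x)$, so the ratios $f(y)/f(x)$ are invariant under the diagonal action and $\tilde b(gx,gy) = \tilde b(x,y)$, whence $H_{\tilde b,0}$ is $G$-invariant by \Cref{HGinv}. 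Thus $\tilde b$ satisfies the hypotheses of \Cref{thm1} and \Cref{CorChoq}.

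This reduces the claim to a Liouville property for $\tilde b$. Indeed, suppose every bounded $H_{\tilde b,0}$-harmonic function is constant and write $f = t g + (1-t) h$ with $g,h \in \mathcal{K}$ and $t \in (0,1)$, all three harmonic. Then $u \defeq g/f$ is non-negative, bounded above by $1/t$ (as $tg \le f$), and $H_{\tilde b,0}$-harmonic, hence constant; evaluating at $x_0$ gives $u \equiv 1$, so $g = f$, and likewise $h = f$, proving $f \in \mathrm{ex}\:\mathcal{K}$.

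The main obstacle is therefore the Liouville property for $\tilde b$, and this is where I expect the real work. Let $u \ge 0$ be bounded and $H_{\tilde b,0}$-harmonic. Applying \Cref{CorChoq} to $\tilde b$ produces a probability measure $\mu$ on the multiplicative harmonic elements $k$ of the associated set (each normalised by $k(x_0)=1$, with character $\chi_k \in \operatorname{Hom}(G,\R_{>0})$) such that $u = u(x_0)\int k \, d\mu$. For fixed $g \in G$ one has $u(g^{n}x_0) = u(x_0)\int \chi_k(g)^{n}\, d\mu(k)$ for all $n \in \Z$, with the left-hand side bounded uniformly in $n$; letting $n \to \pm\infty$ and invoking monotone convergence forces $\{\chi_k(g) \neq 1\}$ to be $\mu$-null. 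Since $\chi_k(g)=1$ whenever $gx=x$ for some $x$, each $\chi_k$ factors through the quotient of $G$ by the subgroup generated by the point stabilisers, which acts freely and cocompactly on a connected locally finite graph and is hence finitely generated; running the orbit argument over a finite generating set therefore shows that $\mu$-almost every $k$ has trivial character, i.e.\ is $G$-invariant. A $G$-invariant harmonic function descends via \Cref{factorsLem} to a harmonic function on the finite connected quotient graph $X_G$, which carries vanishing potential, so the maximum principle makes it constant and equal to $1$ after normalisation. Thus $u = u(x_0)$ is constant, and subtracting a constant extends this to arbitrary bounded $u$, completing the proof.
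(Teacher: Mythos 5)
Your proof is correct, and it takes a genuinely different route from the paper's. Both arguments dispose of $\mathrm{ex}\:\mathcal{K} \subseteq \mathcal{M}$ the same way (local finiteness makes every element of $\mathcal{K}$ harmonic, then \Cref{thm1}(b) applies). For the reverse inclusion the paper stays inside Choquet theory: it first proves (\Cref{supersubharm}) that two elements of $\mathcal{M}$ with the same character coincide, then shows $\mathcal{M} = \mathrm{ex}\:\overline{\mathrm{conv}}\:\mathcal{M}$ (\Cref{MKlem}) by noting that the evaluation functionals $\varphi_g(f) = f(gx_0)$ form a unital subalgebra of $C(\mathcal{M})$ — multiplicativity makes it an algebra, \Cref{supersubharm} makes it point-separating — so Stone--Weierstrass plus uniqueness in the Riesz--Markov theorem forces the Choquet measure of any non-extreme $h \in \mathcal{M}$ to equal $\delta_h$, a contradiction; Krein--Milman then yields $\mathcal{K} = \overline{\mathrm{conv}}\:\mathcal{M}$ and the theorem. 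You instead perform a ground-state (Doob) transform by $f \in \mathcal{M}$ — legitimate precisely because multiplicativity of $f$ makes $\tilde b$ $G$-invariant — and reduce extremality of $f$ to a Liouville property for $(\tilde b,0)$, which you prove from \Cref{CorChoq} via the rigidity argument that boundedness of $u(g^n x_0)$ for $n \in \Z$ forces $\mu$-almost every character to be trivial, finishing with the maximum principle on the finite quotient. This is free of circularity, since \Cref{CorChoq} rests only on \Cref{thm1} and not on \Cref{MKisexK}, and it inverts the paper's logical order: the paper deduces its Liouville corollary from the theorem, whereas you prove a Liouville statement on the transformed graph as the engine of the proof. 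What the paper's route buys is the reuse of \Cref{supersubharm} (needed again later for injectivity of $\rho$) and no dependence on finite generation of any quotient group; what yours buys is a more concrete, probabilistically flavoured argument that produces the Liouville property as an intermediate result rather than an afterthought. Two points you should make explicit to be fully rigorous: the subgroup $N$ generated by all point stabilisers is normal (conjugates of stabilisers are stabilisers of translated points), which is what licenses the use of \Cref{factorsLem}; and the finite generation of $G/N$, which you invoke as folklore, is exactly the path argument written out in the proof of \Cref{lemZd}, so it should be cited or repeated rather than merely asserted.
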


Before proving this theorem, we observe that it yields the following Liouville property as a corollary.

\begin{corollary}[Liouville property]
Let $b$ be a weight function such that $(b,0)$ is a locally finite, connected graph. Let G be a nilpotent group
acting cocompactly on X such that $H_{b,0}$ is G-invariant. Then every bounded harmonic function is constant.
\begin{proof}
We first observe that, for a vanishing potential, the constant function $1$ is an element of $\mathcal{M}$ as it is positive, harmonic and $G$-invariant. Let $f \in C(X)$ be bounded and harmonic. Let $C > 0$ such that $-C < f < C$. Then $h := \frac{1}{2C}(C + f)$ is harmonic and satisfies $0 < h < 1$. It follows that the functions $h/h(x_0)$ and $(1-h)/(1-h(x_0))$ are elements of $\mathcal{K}$. Since $1 \in \mathcal{M}$ it follows from \Cref{MKisexK} that it is an extreme point of $\mathcal{K}$ and hence, the equation
$$ 1 = h(x_0) \cdot h/h(x_0) + (1 - h(x_0)) \cdot (1-h)/(1-h(x_0))$$
shows that $h/h(x_0) = 1 $, so $h$ and therefore $f$ is constant.
\end{proof}
\end{corollary}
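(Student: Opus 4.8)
The plan is to derive the Liouville property as an immediate consequence of \Cref{MKisexK}, by exploiting that when the potential vanishes the constant function is a distinguished extreme point of $\mathcal{K}$.

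First I would verify that $1 \in \mathcal{M}$. For $c = 0$ one computes $H_{b,0}1(x) = \sum_{y \in X} b(x,y)(1 - 1) = 0$, so the constant function $1$ is harmonic; it is positive with $1(x_0) = 1$, and it satisfies $T_g 1 = 1$ for all $g \in G$, hence it is multiplicative with trivial character. Thus $1 \in \mathcal{M}$, and \Cref{MKisexK} yields $1 \in \mathrm{ex}\:\mathcal{K}$, i.e.\ the constant is an extreme point of $\mathcal{K}$.

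Second, given a bounded harmonic $f$, I would renormalise it into a harmonic function taking values strictly in $(0,1)$. Choosing $C > 0$ strictly larger than $\sup_X |f|$ so that $-C < f < C$, and setting $h := (C + f)/(2C)$, linearity of $H_{b,0}$ together with $H_{b,0}1 = 0$ shows that $h$ is harmonic, while the strict bounds on $f$ give $0 < h < 1$ pointwise, and in particular $h(x_0) \in (0,1)$.

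Finally, I would write $1$ as a nontrivial convex combination of two normalised elements of $\mathcal{K}$. Both $h/h(x_0)$ and $(1-h)/(1-h(x_0))$ are positive harmonic functions equal to $1$ at $x_0$ — the latter because $1 - h$ is harmonic and positive since $h < 1$ — so they lie in $\{ u \in \mathcal{H}^+ \mid u(x_0) = 1 \} \subseteq \mathcal{K}$. The identity $1 = h(x_0)\cdot h/h(x_0) + (1 - h(x_0))\cdot (1-h)/(1-h(x_0))$ then expresses the extreme point $1$ as a convex combination with coefficients $h(x_0), 1 - h(x_0) \in (0,1)$, forcing $h/h(x_0) = 1$, so that $h$, and hence $f$, is constant. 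The only point that requires care is that this combination be genuinely nondegenerate and that the denominator $1 - h(x_0)$ not vanish; both are guaranteed precisely by choosing $C$ strictly beyond the bound of $f$, which places $h(x_0)$ strictly inside $(0,1)$. This is the crux of the argument, but it is entirely elementary once $1$ is known to be extreme, so the real content of the corollary lies in the prior theorem.
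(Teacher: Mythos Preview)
Your proof is correct and follows essentially the same route as the paper's: verify that $1 \in \mathcal{M} = \mathrm{ex}\:\mathcal{K}$ via \Cref{MKisexK}, shift and rescale a bounded harmonic $f$ to a harmonic $h$ with $0 < h < 1$, and then write $1$ as a convex combination of $h/h(x_0)$ and $(1-h)/(1-h(x_0))$ in $\mathcal{K}$ to force $h$ constant. Your added remark about choosing $C$ strictly beyond $\sup_X |f|$ to ensure $h(x_0) \in (0,1)$ makes explicit a detail the paper leaves implicit, but otherwise the arguments coincide.
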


We will prove the \Cref{MKisexK} in two steps. The first inclusion follows immediately from \Cref{thm1} of the previous section, and the second one will be proven later. First, we observe the following lemma, which holds even for graphs which are not locally finite. It is analogous to \cite[Lemma 5.2 (i)]{linPinchover}, and the proof is similar.

\begin{lemma}\label{supersubharm}
Let $(b,c)$ be a graph on $X$ with a cocompact group action $G$, let $f_1, f_2 \in \mathrm{Dom}(H_{b,c})$ be strictly positive, normalised and multiplicative with the same associated homomorphism $\gamma_{f_1} = \gamma_{f_2}$. If $f_1$ is superharmonic and $f_2$ is subharmonic, then $f_1 = f_2$ and both are harmonic.
\begin{proof}
Let $V$ be a fundamental domain of the group action. Put $\gamma := \gamma_{f_1} = \gamma_{f_2}$. As $f_1$ and $f_2$ are both positive, we have
$$C := \min_{v \in V,} \frac{f_1(v)}{f_2(v)} > 0$$
and for $x \in X$ we can write $x = gv$ for $g \in G$ and $v \in V$ and obtain
$$f_1(x) - Cf_2(x) = \gamma(g)(f_1(v) - Cf_2(v)) \geq \gamma(g)\left((f_1(v) - \frac{f_1(v)}{f_2(v)} f_2(v)\right) = 0,$$
so the function $h := f_1 - C f_2$ is nonnegative. Moreover, $h$ is superharmonic as $f_1$ is superharmonic and $f_2$ is subharmonic, so $h \in \{ 0 \} \cup \mathcal{S}^+$. Furthermore, $h$ is not strictly positive since $h(v) = 0$ when $C = f_1(v) /f_2(v)$, so $h = 0$ follows from \Cref{harCor1} of the Harnack inequality. We conclude that $f_1 = Cf_2$. Since both functions are normalised at $x_0$ we have $C = 1$ and the statement follows.
\end{proof}
\end{lemma}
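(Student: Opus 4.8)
The plan is to show that the superharmonic function $f_1$ dominates the subharmonic function $f_2$ up to a multiplicative constant, and that this domination must in fact be an equality because the two functions share the same character. The central object is the difference $h := f_1 - C f_2$ for a carefully chosen constant $C > 0$, which I will force to be identically zero.

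First I would exploit cocompactness to reduce the positivity question to the finite fundamental domain $V$. Since $f_1$ and $f_2$ are strictly positive and $V$ is finite, the quantity
$$C := \min_{v \in V} \frac{f_1(v)}{f_2(v)}$$
is a well-defined strictly positive real number, attained at some vertex $v_0 \in V$. The crucial point is that, because both functions are multiplicative with the \emph{same} character $\gamma$, the sign of $h$ on an entire orbit is governed by its value on the representative in $V$: writing an arbitrary $x \in X$ as $x = gv$ with $g \in G$ and $v \in V$, one has
$$h(x) = f_1(gv) - C f_2(gv) = \gamma(g)\bigl(f_1(v) - C f_2(v)\bigr) \geq 0,$$
since $\gamma(g) > 0$ and $C \leq f_1(v)/f_2(v)$ by the choice of $C$. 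Hence $h \geq 0$ everywhere, while $h(v_0) = 0$.

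Next I would observe that $h$ is superharmonic: linearity of $H$ together with $H f_1 \geq 0$ and $H f_2 \leq 0$ gives $H h = H f_1 - C H f_2 \geq 0$. Thus $h$ is a nonnegative superharmonic function that fails to be strictly positive. By \Cref{harCor1}, every element of $\mathcal{S}^+$ is strictly positive, so a nonnegative superharmonic function vanishing anywhere must be the zero function; therefore $h \equiv 0$ and $f_1 = C f_2$. Evaluating at $x_0$ and using the normalisation $f_1(x_0) = f_2(x_0) = 1$ forces $C = 1$, so $f_1 = f_2$. This common function is simultaneously superharmonic and subharmonic, hence harmonic, which completes the argument.

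The main subtlety — and the step I expect to carry the real weight — is the passage from a single vanishing value to global nonnegativity of $h$, which relies essentially on the hypothesis that $f_1$ and $f_2$ have the \emph{identical} character. It is exactly the common factor $\gamma(g)$ that lets the finite minimum over $V$ control the sign of $h$ on all of $X$; with distinct characters the difference would not scale uniformly along orbits, and no single constant $C$ would work. Everything else is a routine application of the strict-positivity corollary of the Harnack inequality.
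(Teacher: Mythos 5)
Your proposal is correct and follows essentially the same route as the paper's own proof: the same constant $C = \min_{v \in V} f_1(v)/f_2(v)$, the same nonnegative superharmonic difference $h = f_1 - Cf_2$ vanishing at the minimising vertex, and the same appeal to \Cref{harCor1} to force $h \equiv 0$, with normalisation at $x_0$ giving $C = 1$. No gaps to report.
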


In particular, whenever $f_1, f_2 \in \mathcal{M}$ and $\gamma_{f_1} = \gamma_{f_2}$, this lemma shows that $f_1 = f_2$. With this at hand, we can prove the following lemma and then the proof of \Cref{MKisexK} follows readily. Like the previous one, \Cref{MKlem} is analogous to \cite[Lemma 5.4]{linPinchover}, and the proof is similar.

\begin{lemma}\label{MKlem}
Let $(b,c)$ be a locally finite and connected graph on $X$ with a cocompact group action $G$ and a fixed $x_0 \in X$. Then
$$\mathcal{M} = \mathrm{ex} \: \overline{\mathrm{conv}} \: \mathcal{M}.$$

\begin{proof}
First, since $\mathcal{M} \subseteq \mathcal{K}$, it follows that $\overline{\mathrm{conv}} \mathcal{M} \subseteq \mathcal{K}$ is compact. Then, by the Krein-Milman Theorem, we have
$$\mathrm{ex} \: \overline{\mathrm{conv}} \mathcal{M} \subseteq \overline{\mathcal{M}} = \mathcal{M}.$$
Assume for the sake of contradiction that equality does not hold, i.e. there exists $h \in \mathcal{M} \setminus \mathrm{ex} \: \overline{\mathrm{conv}} \mathcal{M}$. Then, when applying Choquet's theorem to $h$, we may extend the obtained measure $\mu$ to $\mathcal{M}$ such that
$$\int\displaylimits_\mathcal{M}\varphi(k) \d \mu(k)=\varphi(h) = \int\displaylimits_\mathcal{M}\varphi(k) \d \delta_h(k) $$
holds for all continuous linear functionals $\varphi$ on $\mathcal{M}$, where $\delta_h$ is the Dirac measure at $h$. We note that $\mu$ and $\delta_0$ are different, since they have disjoint support, but their integrals of continuous linear functionals coincide by the equation above. Let
$$A \defeq \mathrm{lin}\{\varphi_g : \mathcal{M} \to \R, \: f \mapsto \gamma_f(g) = f(gx_0) \mid g \in G\}.$$
Then $A$ is a subspace of $C(\mathcal{M})$ by construction, but it is even a subalgebra, as 
$$\varphi_g(f)\varphi_{l}(f) = f(gx_0)f(lx_0) = f(glx_0) = \varphi_{gl}(f)$$
holds for any $f \in \mathcal{M}$ and $g,l \in G$. Moreover, $\varphi_e \in A$ is constant, as $\varphi_e(f) = f(x_0) = 1$ for all $f \in \mathcal{M}$. Finally, for $f_1 \neq f_2 \in \mathcal{M}$, we have $\gamma_{f_1} \neq \gamma_{f_2}$ by \Cref{supersubharm}, and hence there exists $g \in G$ with
$$\varphi_g(f_1) = \gamma_{f_1}(g) \neq \gamma_{f_2}(g) =\varphi_g(f_2).$$
We have shown that $A$ is a subalgebra of $C(\mathcal{M})$ which separates the points and contains the constants, so it is dense in $C(\mathcal{M})$ with respect to the supremum norm by the theorem of Stone-Weierstrass.

It follows that $\delta_0(\varphi) = \mu(\varphi)$ holds for all $\varphi \in C(\mathcal{M})$, since $\delta_h$ and $\mu$ are probability measures and they coincide on $A$. We conclude that $\delta_h = \mu$ by the uniqueness of the Riesz-Markov representation theorem, which is a contradiction. 
\end{proof}
\end{lemma}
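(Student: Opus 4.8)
The plan is to prove the two inclusions separately, with essentially all of the nontrivial content lying in $\mathcal{M} \subseteq \mathrm{ex}\,\overline{\mathrm{conv}}\,\mathcal{M}$. First I would dispose of $\mathrm{ex}\,\overline{\mathrm{conv}}\,\mathcal{M} \subseteq \mathcal{M}$. Since $\mathcal{M} \subseteq \mathcal{K}$ and $\mathcal{K}$ is compact and convex, the closed convex hull $\overline{\mathrm{conv}}\,\mathcal{M}$ is a compact convex subset of $\mathcal{K}$. By Milman's partial converse to the Krein--Milman theorem, every extreme point of $\overline{\mathrm{conv}}\,\mathcal{M}$ lies in the closure of the generating set $\mathcal{M}$; and since $\mathcal{M}$ was already shown to be closed (as a closed subset of the compact $\mathcal{K}$), this closure is $\mathcal{M}$ itself.

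For the reverse inclusion I would argue by contradiction. Suppose some $h \in \mathcal{M}$ fails to be an extreme point of $\overline{\mathrm{conv}}\,\mathcal{M}$. Applying Choquet's theorem, whose metrizability hypothesis holds because the ambient set sits inside a countable product of compact intervals, I obtain a representing probability measure $\mu$ on $\mathrm{ex}\,\overline{\mathrm{conv}}\,\mathcal{M}$, which by the first inclusion is supported on $\mathcal{M}$. Non-extremality of $h$ forces $\mu \neq \delta_h$, so the whole strategy is to contradict this by showing that $\mu$ and $\delta_h$ must in fact coincide.

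The key device is the family of evaluation functionals $\varphi_g : \mathcal{M} \to \R$, $f \mapsto f(gx_0) = \gamma_f(g)$, for $g \in G$. Each $\varphi_g$ is linear and continuous in the product topology, so the representing property of $\mu$ gives $\int \varphi_g \, d\mu = \varphi_g(h) = \int \varphi_g \, d\delta_h$, and by linearity this persists on the linear span $A = \mathrm{lin}\{\varphi_g \mid g \in G\}$. I would then verify that $A$ is a subalgebra of $C(\mathcal{M})$ which contains the constants and separates points: closure under multiplication follows from multiplicativity via $\varphi_g\varphi_l = \varphi_{gl}$; the constant $\varphi_e \equiv 1$ lies in $A$; and separation is exactly where \Cref{supersubharm} enters, since distinct $f_1, f_2 \in \mathcal{M}$ must have distinct characters and hence differ on some $\varphi_g$. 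The Stone--Weierstrass theorem then makes $A$ dense in $C(\mathcal{M})$, so the two probability measures, agreeing on the dense set $A$, agree on all of $C(\mathcal{M})$; by the uniqueness in the Riesz--Markov representation this yields $\mu = \delta_h$, the desired contradiction.

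The main obstacle is the point-separation step: it amounts to the injectivity of the assignment $f \mapsto \gamma_f$ on $\mathcal{M}$, which is not formal and relies on the sub/superharmonic comparison of \Cref{supersubharm} (and through it on the Harnack inequality and the strict positivity from \Cref{harCor1}). Everything else — the algebra structure of $A$, its density, and the measure-theoretic conclusion — is routine once separation is in hand.
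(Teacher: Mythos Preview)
Your proposal is correct and follows essentially the same approach as the paper: the easy inclusion via Milman's converse, then contradiction using Choquet's theorem, the evaluation functionals $\varphi_g$, Stone--Weierstrass on the algebra $A$ (with point separation via \Cref{supersubharm}), and Riesz--Markov uniqueness. The structure and all key ingredients match the paper's proof.
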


\begin{proof}[Proof of \Cref{MKisexK}]
From \Cref{harmEx} in the last chapter, we know that $\mathrm{ex} \: \mathcal{K} \subseteq \mathcal{M}$ and therfore $\mathcal{K} = \overline{\mathrm{conv}} \mathcal{M}$ by the Krein-Milman theorem and the fact that $\mathcal{M}$ is a subset of the convex and compact set $\mathcal{K}$.  Combining this with \Cref{MKlem}, we conclude
\begin{equation*}
    \mathcal{M} = \mathrm{ex} \: \overline{\mathrm{conv}} \mathcal{M} = \mathrm{ex} \: \mathcal{K}. \qedhere
\end{equation*}
\end{proof}

\subsection{Multiplicative generalised eigenfunctions}
So far we have only studied the positive solutions of a single operator $H = H_{b,c}$. In this subsection, we will consider positive generalised eigenfunctions, i.e. solutions of $H - \lambda$ for $\lambda \in \R$, and these solutions will be called $\lambda$-harmonic. Since $b$ is assumed to be locally finite, we can apply \Cref{MKisexK} for each $\lambda \in \R$ to the operator $H - \lambda = H_{b, c-\lambda}$, and in view of this theorem and Choquet's theorem we are particularly interested in studying the multiplicative positive $\lambda$-harmonic functions. 

We denote by $\mathcal{H}^+_\lambda, \mathcal{S}^+_{1, \lambda}, \mathcal{K}_\lambda$ and $ {\mathcal{M}_\lambda}$ the sets previously defined as $\mathcal{H}^+, \mathcal{S}^+_1, \mathcal{K}$ and $\mathcal{M}$ respectively, but for the graph $(b,c-\lambda)$, so for example $\mathcal{H}^+_\lambda$ is the set of positive $\lambda$-harmonic functions. Moreover, for $I \subseteq \R$ we write $\mathcal{H}^+_I, \mathcal{K}_I$ and ${\mathcal{M}_I}$ for the unions over $\lambda \in I$ of $\mathcal{H}_\lambda^+, \mathcal{K}_\lambda$ and ${\mathcal{M}_\lambda}$ respectively. Since these sets only consist of $\lambda$-harmonic functions, the unions over different parameters are always disjoint. Then the third main theorem of the paper is the following, which gives a better understanding of the structure of the family of sets $(\mathcal{M}_\lambda)_{\lambda \in \R}$. 

\begin{theorem}\label{thm3}
 Let $(b,c)$ be a locally finite and connected graph on $X$ with a cocompact and nilpotent group action $G$ such that $H_{b,c}$ is $G$-invariant. Then there exists $\lambda_0 > 0$ and $d \in \N_0$ such that the family of sets $\mathcal{M}_\lambda$ for $\lambda \in \R$ satisfies the following properties. 
 \begin{enumerate}[label=(\alph*)]
     \item The set ${\mathcal{M}_\lambda}$ is empty for $\lambda > \lambda_0$.
     \item The set ${\mathcal{M}_{\lambda_0}}$ consists of exactly one point.
     \item The set $\mathcal{M}_\lambda$ is homeomorphic to a $(d-1)$-sphere (empty if $d =0$) for $\lambda < \lambda_0$.
     \item The set ${\mathcal{M}_{(\lambda,\lambda_0]}}$
     is homeomorphic to an open $d$-ball for any  $\lambda \in [-\infty, \lambda_0)$. 
 \end{enumerate}
\end{theorem}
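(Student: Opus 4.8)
The plan is to reduce to a finitely generated abelian action, turn the problem into a finite-dimensional Perron--Frobenius eigenvalue problem parametrised by the exponent, and then realise each $\mathcal{M}_\lambda$ as a level set of a single concave function on $\R^d$, from which all four statements follow by convex geometry.

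First I would use \Cref{thm1} and \Cref{factorsLem} to reduce to the case that $G$ is a finitely generated abelian group. For the operator $H-\lambda=H_{b,c-\lambda}$ every positive $\lambda$-harmonic function is $[G,G]$-invariant by \Cref{thm1}, so \Cref{factorsLem}(h) provides a homeomorphism of $\mathcal{H}^+_\lambda$ with the corresponding set for the graph factored by $R=[G,G]$, on which $G^{\mathrm{ab}}=G/[G,G]$ acts. This homeomorphism preserves multiplicativity, so $\mathcal{M}_\lambda$ is unchanged, and connectedness, local finiteness and cocompactness are retained by \Cref{factorsLem}(c),(d),(b). A cocompact action on a connected locally finite graph forces $G$, hence $G^{\mathrm{ab}}$, to be finitely generated, so $G^{\mathrm{ab}}\cong\Z^d\oplus T$ with $T$ finite. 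Since characters take values in the torsion-free group $\R_{>0}$, every $\gamma\in\operatorname{Hom}(G,\R_{>0})$ kills $T$ and all (finite) stabilisers and is determined by some $\xi\in\R^d$ via $\gamma(g)=e^{\langle\xi,[g]\rangle}$, where $[g]$ is the image of $g$ in $\Z^d$; thus $\operatorname{Hom}(G,\R_{>0})\cong\R^d$ and $d$ is independent of $\lambda$.

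Next, fix a finite fundamental domain $V$. A positive normalised multiplicative $\lambda$-harmonic function with exponent $\xi$ is determined by its restriction $\psi=f|_V$, and the identity $\sum_y b(x,y)f(y)=(\deg(x)-\lambda)f(x)$ becomes the finite system $M_\xi\psi=\lambda\psi$, where $M_\xi\defeq\operatorname{diag}(\deg|_V)-B_\xi$ and $(B_\xi\psi)(v)=\sum_{w\in V}\bigl(\sum_{g\in G}b(v,gw)e^{\langle\xi,[g]\rangle}\bigr)\psi(w)$; local finiteness makes each entry a finite sum of exponentials and connectedness makes $B_\xi$ irreducible. The matrix $-M_\xi$ is thus an irreducible essentially-nonnegative (Metzler) matrix, so by Perron--Frobenius it has a simple real eigenvalue $-\Lambda(\xi)$ which is the unique eigenvalue admitting a positive eigenvector. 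Hence a positive multiplicative $\lambda$-harmonic function with exponent $\xi$ exists if and only if $\lambda=\Lambda(\xi)$, and it is then unique up to scaling. Combined with \Cref{supersubharm}, which shows $f\mapsto\gamma_f$ is injective on $\mathcal{M}_\lambda$, the exponent map $\Theta:f\mapsto\xi_f$ is a bijection $\mathcal{M}_\lambda\to\{\xi:\Lambda(\xi)=\lambda\}$. Continuity of $\Theta$ is immediate from $\gamma_f(g)=f(gx_0)$, and continuity of $\Theta^{-1}$ follows from continuous dependence of the Perron eigenvector on $\xi$, so $\Theta$ is a homeomorphism; the same map identifies $\mathcal{M}_{(\lambda,\lambda_0]}$ with $\{\xi:\Lambda(\xi)>\lambda\}$.

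It then remains to analyse $\Lambda:\R^d\to\R$. Shifting $-M_\xi$ by a large constant makes it genuinely nonnegative with entries that are finite sums of exponentials, hence log-convex in $\xi$; by Kingman's convexity theorem its Perron root is log-convex, so $-\Lambda$ is convex, i.e. $\Lambda$ is concave and continuous. Coercivity $\Lambda(\xi)\to-\infty$ as $|\xi|\to\infty$ follows because the edge-displacements $[g]$ generate $\Z^d$ (by connectedness) and a generating set of a group cannot lie in a closed half-space, so along every ray some entry of $B_\xi$ blows up and drives the Perron root to $+\infty$. Strict concavity, needed for (b), comes from positive-definiteness of the Hessian of $\xi\mapsto\log$ of the Perron root, which equals the asymptotic covariance of the displacements and is nondegenerate precisely because the displacements span $\R^d$. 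With $\Lambda$ strictly concave, continuous and coercive, set $\lambda_0\defeq\max\Lambda$: then $\{\Lambda=\lambda\}$ is empty for $\lambda>\lambda_0$ (a), is the unique maximiser for $\lambda=\lambda_0$ (b), and for $\lambda<\lambda_0$ is the boundary of the convex body $\{\Lambda\ge\lambda\}$, hence homeomorphic to $S^{d-1}$ (c); finally $\{\Lambda>\lambda\}$ is a nonempty open convex set, bounded for finite $\lambda$ and all of $\R^d$ for $\lambda=-\infty$, hence homeomorphic to an open $d$-ball (d), with the case $d=0$ covered directly. The main obstacle I expect is the fine analysis of $\Lambda$, namely establishing strict concavity and coercivity rigorously through the covariance computation and the half-space argument; once these are in place, the concavity and the convex-geometric conclusions are comparatively routine.
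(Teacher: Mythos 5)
Your overall strategy (reduce to an abelian lattice action, parametrise multiplicative functions by a Perron--Frobenius eigenvalue problem over $\R^d$, and finish with the convex geometry of a concave function $\Lambda$) is the same as the paper's, but your reduction step has a genuine gap. Factoring by $[G,G]$ alone is not enough: you must also kill the vertex stabilisers (equivalently, the kernel of the action), and these need not be finite. Concretely, let $G=\Z\times\Z$ act on the standard line graph $X=\Z$ by $(n,m)\cdot x = x+n$; this is a cocompact, nilpotent, invariant action satisfying all hypotheses of \Cref{thm3}, yet $\operatorname{Stab}(x)=\{0\}\times\Z$ is infinite. Here $[G,G]$ is trivial and $\operatorname{rank}(G^{\mathrm{ab}})=2$, while the true $d$ is $1$ (your statement (c) would then produce a circle where the correct answer is $S^0$, i.e.\ two points), and your matrix entries $\sum_{g\in G}b(v,gw)e^{\langle\xi,[g]\rangle}$ are divergent, because infinitely many $g$ send $w$ to the same vertex; the assertion that local finiteness makes each entry a finite sum of exponentials fails. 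The paper's \Cref{lemZd} is designed precisely to close this hole: it quotients by the larger normal subgroup $R=\bigcap_{f\in\mathcal{M}_{\R}}\ker\gamma_f$, which contains $[G,G]$ \emph{and} all stabilisers, and then \Cref{factorsLem}~(e) makes the induced $\Z^d$-action free; freeness is what makes the matrix entries finite sums, identifies $d$ correctly, and makes the exponent map surjective.

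Two of your analytic steps are also incorrect as stated, although their conclusions are true and repairable. For coercivity, ``a generating set of a group cannot lie in a closed half-space'' is false ($\{1\}$ generates $\Z$ and lies in $[0,\infty)$); what is true, and what is needed, is that by \emph{directed} connectedness every element of $\Z^d$ is realised as the total displacement of a loop, hence as a sum of edge displacements, and such sums cannot all lie in a closed half-space. Moreover, a single entry of an irreducible nonnegative matrix tending to $\infty$ does not force the Perron root to do so: the matrix $\begin{pmatrix} 0 & e^{t}\\ e^{-2t} & 0\end{pmatrix}$ has Perron root $e^{-t/2}\to 0$. One needs a whole cycle whose total displacement pairs positively with the ray direction. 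For strict concavity, nondegeneracy of the asymptotic covariance is not equivalent to the single-step displacements spanning $\R^d$ (degeneracy is a coboundary condition along loops), and the identity ``Hessian of the log-Perron root equals the asymptotic covariance'' requires genuine Markov-additive/perturbation machinery to justify. The paper obtains both facts far more cheaply: strict concavity by an elementary Hölder argument combined with \Cref{supersubharm} (see \Cref{lambdaProp}), and boundedness of the superlevel sets not from coercivity of $\Lambda$ at all, but from compactness of $\mathcal{M}_{[\lambda,\lambda_0]}$ in the function space via the Harnack inequality (\Cref{Scomp}, \Cref{hhomeo}), pushed forward by $\rho$.
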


This theorem is similar to \cite[Theorem 5.15]{linPinchover}, however the result here is a purely topological one. The situation here is also somewhat simplified, since we can always identify the space of group homomorphisms from $G$ to $\R^+$ with $\R^n$. We achieve this by reducing everything to the case of $G = \Z^d$. Indeed, the following lemma allows us to do this.

\begin{lemma}\label{lemZd}
Let $(b,c)$ be a locally finite connected graph on $X$ with a nilpotent cocompact group action $G$ such that $H_{b,c}$ is $G$-invariant. Then there exists a normal subgroup $R \subseteq G$ such that every $f \in \mathcal{H}_{\R}^+$ is $R$-invariant, $G / R \cong \Z^d$ for some $d \in \N_0$ and $R$ includes all stabilisers of elements $x \in X$.
\end{lemma}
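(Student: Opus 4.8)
The plan is to construct $R$ in two stages: first a normal subgroup $R_1$ that contains both the commutator subgroup $[G,G]$ and every stabiliser and under which all of $\mathcal{H}^+_\R$ is invariant, and then to enlarge $R_1$ by just enough to kill the torsion of the quotient, which I will argue is finitely generated abelian, so that the final quotient is $\Z^d$.

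First I would record two invariance facts. For each $\lambda \in \R$ the operator $H - \lambda = H_{b,c-\lambda}$ is again a $G$-invariant Schrödinger operator, since $c-\lambda$ is $G$-invariant; hence \Cref{thm1}(a), applied to $\mathcal{K}_\lambda$, shows that every $f \in \mathcal{H}^+_\lambda$ is $[G,G]$-invariant, and therefore so is every $f \in \mathcal{H}^+_\R$. For the stabilisers, set $S_0 \defeq \langle \bigcup_{x \in X} \mathrm{Stab}(x)\rangle$; since $g\,\mathrm{Stab}(x)\,g^{-1} = \mathrm{Stab}(gx)$, conjugation permutes the stabilisers, so $S_0$ is normal in $G$. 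The key observation is that every multiplicative $k$ annihilates all stabilisers: if $g \in \mathrm{Stab}(x)$ then $g^{-1} \in \mathrm{Stab}(x)$, so $k(x) = k(g^{-1}x) = T_gk(x) = \gamma_k(g^{-1})k(x)$, and strict positivity forces $\gamma_k(g) = 1$. Thus $\gamma_k \equiv 1$ on $S_0$ and every multiplicative $k$ is $S_0$-invariant. Applying \Cref{CorChoq} to $H-\lambda$, each $f \in \mathcal{H}^+_\lambda$ is an integral over such multiplicative functions, so $f$ is $S_0$-invariant as well (for $\lambda$ with $\mathcal{H}^+_\lambda = \emptyset$ there is nothing to check). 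Setting $R_1 \defeq [G,G]\cdot S_0$, a normal subgroup as a product of two normal subgroups, every $f \in \mathcal{H}^+_\R$ is $R_1$-invariant, $R_1$ contains all stabilisers, and $G/R_1$ is abelian.

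Because $R_1$ contains all stabilisers, \Cref{factorsLem}(b)--(e) show that $G/R_1$ acts freely and cocompactly on the connected, locally finite factored graph $(b_{R_1},c_{R_1})$ over $X_{R_1}$. I would then prove the auxiliary fact that a group acting freely and cocompactly on a connected, locally finite graph is finitely generated: fixing a finite fundamental domain $W$ and letting $N[W]$ be its (finite) $1$-neighbourhood, the set $S = \{s : sW \cap N[W] \neq \emptyset\}$ is finite since freeness makes $s \mapsto (w,sw)$ injective, and a path $w_0 = y_0 \sim \dots \sim y_n = aw_0$, after writing $y_i = a_i w_i$, expresses $a$ as the product $\prod_i (a_i^{-1}a_{i+1})$ of elements of $S$. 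Hence $G/R_1$ is a finitely generated abelian group, so $G/R_1 \cong \Z^d \oplus T$ with $T$ its finite torsion subgroup. Taking $\pi : G \to G/R_1$ and $R \defeq \pi^{-1}(T)$ yields a normal subgroup with $R \supseteq R_1 \supseteq \mathrm{Stab}(x)$ for all $x$ and $G/R \cong (G/R_1)/T \cong \Z^d$. Finally, $R$-invariance of $\mathcal{H}^+_\R$ follows from torsion-freeness of $\R_{>0}$: each multiplicative $k$ in the Choquet decomposition is $R_1$-invariant, so $\gamma_k$ descends to $G/R_1$ and must kill the torsion $T$, giving $\gamma_k \equiv 1$ on $R$; thus every such $k$, and therefore every $f \in \mathcal{H}^+_\R$, is $R$-invariant.

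The main obstacle is the finite-generation step: everything else is bookkeeping with \Cref{thm1}, \Cref{CorChoq} and \Cref{factorsLem}, but producing an honest $\Z^d$ requires knowing $G/R_1$ is finitely generated, which is precisely where local finiteness and cocompactness enter through the Milnor--\v{S}varc-type path argument above. A secondary point of care is that a \emph{single} $R$ must work uniformly for all $\lambda$; this is why the character argument is run on the multiplicative generators (whose characters kill stabilisers and torsion for reasons independent of $\lambda$) rather than on individual harmonic functions.
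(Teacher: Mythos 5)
Your proof is correct, and it uses the same essential ingredients as the paper (characters into $\R_{>0}$, \Cref{thm1} plus \Cref{CorChoq} to pass from multiplicative functions to all of $\mathcal{H}^+_{\R}$, and a path argument for finite generation), but it packages the construction of $R$ genuinely differently. The paper defines $R$ in one shot as $R \defeq \bigcap_{f \in \mathcal{M}_{\R}} \ker \gamma_f$, the common kernel of all characters of multiplicative eigenfunctions across all $\lambda$. With that definition everything except finite generation is immediate from properties of the characters: $[G,G] \subseteq R$ by \Cref{thm1}, stabilisers lie in $R$ by the same computation you give, $R$-invariance of $\mathcal{H}^+_{\R}$ follows from \Cref{CorChoq}, and the quotient $G/R$ is \emph{automatically} torsion-free because $\gamma_f(g)^n = 1$ forces $\gamma_f(g) = 1$ in $\R_{>0}$. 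Finite generation is then proved by exactly your path argument, but run directly on the original graph: the set $S = \{Rs \in G/R \mid b(v,sw) > 0 \text{ for some } v,w \in V\}$ is finite by local finiteness together with stabiliser containment (no need to factor the graph first and invoke freeness), and connectedness plus cocompactness expresses any coset as a product of elements of $S$. Your two-stage route --- $R_1 = [G,G]\cdot S_0$, factoring, a Milnor--\v{S}varc lemma on the free quotient action, the structure theorem $G/R_1 \cong \Z^d \oplus T$, and pulling back $T$ --- pays two extra costs the paper avoids (handling a torsion summand, and a final invariance check on $\pi^{-1}(T)$); note that the fact you use to kill $T$, torsion-freeness of $\R_{>0}$, is precisely what makes the paper's $R$ torsion-free from the outset. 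What your version buys in exchange is a cleanly isolated, reusable geometric fact (a group acting freely and cocompactly on a connected locally finite graph is finitely generated) and a more explicit description of $R$ as built from commutators, stabilisers, and torsion, whereas the paper's kernel-intersection definition buys brevity by collapsing all the group-theoretic properties into properties of characters.
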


\begin{proof}
We define
$$R := \bigcap_{f \in \mathcal{M}_{\R}} \mathrm{ker} \: \gamma_f,$$
which is a normal subgroup as an intersection of kernels of group homomorphisms. We see that every $f \in \mathcal{M}_{\R}$ is $R$-invariant, and by \Cref{CorChoq} it follows that every $f \in \mathcal{H}_{\R}^+$ is $R$-invariant. Moreover, when $g \in \mathrm{Stab}(x)$ for $x \in X$ we have $$\gamma_f(g) = \frac{f(gx)}{f(x)} = \frac{f(x)}{f(x)} = 1$$
for all $f \in \mathcal{M}_{\R}$, so $g \in R$ and it follows that $R$ includes all the stabilisers. 
It is left to show that $G / R \cong \Z^d$. We do this by showing that it is abelian, torsion free and finitely generated.
First, observe $[G,G] \subseteq R$ by \Cref{thm1}, hence $G / R$ is abelian. Moreover, if $Rg \in G / R$ has finite order $n$, then $\gamma_f(g^n) = 1$ for all $f \in \mathcal{M}_{\R}$ which implies $\gamma_f(g) = 1$ , hence $Rg = Re$ and it follows that $G / R$ is torsion-free.

Finally, let $V$ be a fundamental domain of the group action and define
$$S := \{Rs \in G / R \mid b(v,sw) > 0 \text{ for some } v,w \in V\}.$$
The set $S$ is finite, as $V$ is finite, $b$ is locally finite and $R$ contains all stabilisers. Given $g \in G$ it follows from connectedness of the graph together with cocompactness of the $G$-action that there exists a path
$$x_0 = g_1v_1 \sim \dots \sim g_nv_n = gx_0$$
where $g_i \in G$ and $v_i \in V$. By $G$-invariance of $b$ we have $b(v_i, g_i^{-1}g_{i+1}v_{i+1}) > 0$ and hence putting $s_i = g_i^{-1}g_{i+1}$ we obtain
$$Rg = Rs_1 \dots Rs_{n-1}$$
where $Rs_i \in S$, as $R$ also includes the stabiliser of $x_0$. It follows that $G / R$ is finitely generated by $S$.
We conclude that $G / R$ is abelian, torsion free and finitely generated, so $G / R \cong \Z^d$ for some $d \in \N_0$.
\end{proof}

Our first goal in this section is identifying the total set $\mathcal{M}_\R$ of the family of multiplicative normalised eigenfunctions with $\R^d$. Indeed, since a homomorphism from $(\Z^d, +)$ to $(\R, +)$ is uniquely determined by its values on the $d$ generators $e_1, \dots e_d$ of $\Z^d$, we may identify a vector $\alpha \in \R^d$ with the homomorphism 

$$ \alpha : \Z^d \to \R, \quad \alpha(z_1, \dots, z_d) \defeq \langle \alpha, z \rangle = \sum_{k=1}^d \alpha_k z_k,$$ 
where $\alpha_k$ denotes the $k$th component of the vector $\alpha$. Clearly this identification is an isomorphism of topological vector spaces, so in this way it is only a slight abuse of notation to denote both objects by $\alpha$.

This allows us to define
$$\rho : \mathcal{M}_{\R} \to \R^d, \quad f \mapsto \alpha_f \defeq \log \circ \gamma_f,$$
which associates to each multiplicative function its character, composed with the logarithm. The result $\alpha_f$ is a homomorphism from $(\Z^d, +)$ into $(\R,+)$, so it can be viewed as an element of $\R^d$ by the discussion above.
From \Cref{supersubharm} it follows that $\rho$ is actually injective, as $f \mapsto \gamma_f$ is injective. However, we want to show that $\rho$ is also surjective, and that it is in fact a homeomorphism.

In the following, we show surjectivity of $\rho$ by explicitly constructing an inverse function. For $\alpha \in \R^d$, define the matrix $Q_\alpha$ by

$$(Q_\alpha)_{v,w} \defeq \sum_{z \in \Z^d} b(v,zw) \exp(\langle \alpha, z \rangle )$$
for $v, w \in V$ with $ v \neq w$, and
$$(Q_\alpha)_{v,v} =  \sum_{z \in \Z^d} b(v,zv) \exp(\langle \alpha, z \rangle ) + \max_{w \in V} \deg(w) - \deg(v)$$
for all $v \in V$. We see that $Q_\alpha$ has nonnegative entries, and it is irreducible as the graph is connected. Indeed, given $v,w \in V$ we can find a path $v = z_1w_1 \sim z_2w_2 \sim \dots \sim z_nw_n = w$ in $b$ and then we estimate
\begin{align*}(Q_\alpha^{n-1})_{v,w} \geq \prod_{i=1}^{n-1} (Q_\alpha)_{w_i,w_{i+1}} &\geq \prod_{i=1}^{n-1} b(w_i, (z_{i+1} - z_i)w_{i+1})\exp(\langle \alpha, z_{i+1} - z_i \rangle ) \\
&= \exp(\langle \alpha, z_n - z_1 \rangle) \prod_{i=1}^{n-1} b(z_iw_i,z_{i+1}w_{i+1}) > 0. \end{align*}

Therefore, we can apply the Perron-Frobenius theorem to $Q_\alpha$ and we obtain the following connection of the matrices $Q_\alpha$ to $\rho$. This lemma  is somewhat analogous to \cite[Corollary 4.9]{linPinchover}, but the situation here is slightly simplified since we can use the theorem of Perron-Frobenius.

\begin{lemma}\label{rhobij}
    The function $\rho$ is bijective. Its inverse is given by
    $\rho^{-1}(\alpha)(zv) = \exp(\langle \alpha, z \rangle) \varphi_{\alpha}(v)$, where $\varphi_\alpha$ denotes the Perron-Frobenius eigenvector of $Q_\alpha$. Moreover, $\rho^{-1}(\alpha) \in \mathcal{M}_\lambda$ where $\lambda = \max_{v \in V} \deg(v) - \theta(\alpha)$ and $\theta(\alpha)$ denotes the Perron-Frobenius eigenvalue of $Q_\alpha$.
    \begin{proof}
        
Let $\alpha \in \R^d$ and consider the function 
$$h_\alpha : X \to \R, \quad h_\alpha(zv) = \exp(\langle \alpha, z \rangle )$$
for $v \in V$ and $z \in \Z^d$, which is well-defined since the group action is free. Then, the associated additive homomorphism of $h_\alpha$ is $\alpha$, so a function $f \in \mathcal{M}_\R$  satisfies $\rho(f) = h_\alpha$ if and only if  it has the form $f = h_\alpha \varphi$  where $\varphi$ is a $G$-invariant function. This leads us to consider the following operator. 
$$H_{h_\alpha} : C(X) \to C(X), \quad H_{h_\alpha}f \defeq \frac{1}{h_\alpha} H (h_\alpha f).$$
This operator is usually called \emph{ground state transform} of $H$ with respect to $h_\alpha$ in the literature. It is defined in such a way that it commutes with the group shift. Indeed, for $f \in C(X)$ and $g \in G$ we have
\begin{align*} H_{h_\alpha}T_gf &= \frac{1}{h_\alpha}H(h_\alpha T_gf) \\
&= \frac{1}{h_\alpha}H(\exp(-\alpha(z)\rangle) T_g(h_\alpha f)) \\
&= \frac{1}{\exp(\alpha(z)) h_\alpha}T_gH(h_\alpha f) \\
&= T_g H_{h_\alpha},\end{align*}
since $h_\alpha$ is multiplicative with character $\exp \circ \alpha$ and $H$ commutes with the group shift. In particular, $H_{h_\alpha}$ maps $G$-invariant functions to $G$-invariant functions. Moreover, we observe that for a $G$-invariant function $\varphi$, the product $f = h_\alpha \varphi$ is positive and $\lambda$-harmonic if and only if $\varphi$ is positive and
$$(H_{h_\alpha} -\lambda)\varphi = 0.$$
In other words, there is a one-to-one correspondence of elements $f \in \rho^{-1}(\{\alpha\})$ and positive eigenvectors $\varphi$ of the restriction of $H_{h_\alpha}$ to $C(X)^G$, the space of $G$-invariant functions. Now the space $C(X)^G$ is isomorphic to $C(V)$, as a function on $V$ can be extended periodically to $C^G(V)$, and this map is surjective. Under this isomorphism, the matrix entries of the restriction $H_{h_\alpha}|_{C(X)^G}$ are
$$H_{h_{\alpha}}1_{Gw}(v) = \frac{1}{h_{\alpha}(v)}\sum_{g \in G}b(v,gw)(- h_{\alpha}(gw)) = -\sum_{z \in \Z^d} b(v,zw)\exp(\langle \alpha, z \rangle)$$
for the off-diagonal entries corresponding to $v,w \in V$, and 
$$H_{h_\alpha}1_{Gv}(v) = \frac{1}{h_\alpha(v)} \left(\sum_{g \in G} b(v,gw)(h_\alpha(v) - h_\alpha(gv)) + c(v)h_{\alpha}(v)\right) = \deg(v) - \sum_{z \in \Z^d} b(v,zv) \exp(\langle \alpha, z \rangle).$$
for the diagonal entries corresponding to $v \in V$. here $1_{Gv}$ denotes the characteristic function of the orbit of $v$ with respect to the group action. Comparing this to the matrix $Q_\alpha$, we see that $$H_{h_\alpha}|_{C(X)^G} = \max \deg(v)I - Q_\alpha,$$
so the two matrices have the same eigenspaces. Then the elements $f \in \rho^{-1}(\{\alpha\})$ are given by $f = h_\alpha \varphi$ where $\varphi$ is a positive eigenvector of $H_{h_\alpha}|_{C(X)^G}$. However, $Q_\alpha$ has exactly the same eigenvectors as $H_{h_\alpha}|_{C(X))^G}$, and the Perron-Frobenius eigenvector $\varphi_\alpha$ is the unique positive eigenvector of $Q_\alpha$. It follows that the function $\rho$ is indeed bijective and the inverse is given by
$$\rho^{-1}(\alpha)(zv) = h_\alpha(zv)\varphi_\alpha(v) = \exp(\langle \alpha, z \rangle ) \varphi_\alpha(v),$$
where $\varphi_\alpha$ is the Perron-Frobenius eigenvector of $Q_\alpha$. 

For the moreover part, observe that the eigenvalue of $Q_\alpha$ associated to $\varphi_\alpha$ is the Perron-Frobenius eigenvalue $\theta(\alpha)$, so the eigenvalue of $H_{h_\alpha}|_{C(X)^G}$ to the same vector is
$$\lambda = \max_{v \in V} \deg(v) - \theta(\alpha).$$
By construction of $H_{h_\alpha}$, it follows that $f$ is $\lambda$-harmonic for the same $\lambda$.
    \end{proof}
\end{lemma}

We can now identify the total space $\mathcal{M}_\R$ with $\R^d$, and we will see that $\rho$ is actually a homeomorphism in \Cref{hhomeo} below. This identification allows us to partition the space $\R^d$ into the following family of sets. For $\lambda \in \R$ and $I \subseteq \R$, define
$$\mathcal{A}_\lambda := \rho(\mathcal{M}_\lambda), \quad \mathcal{A}_I := \bigcup_{\lambda' \in I} \mathcal{A}_{\lambda'} = \rho(\mathcal{M}_I).$$
In the following, we will study the family $(\mathcal{A}_\lambda)_{\lambda \in \R}$, and then we will be able to transfer all of the topological properties of this family back to $(\mathcal{M}_\lambda)_{\lambda \in \R}$ for the proof of \Cref{thm3}. The sets $\mathcal{A}_\lambda$ are much easier to work with, since there is a vector space structure on $\R^d$. Indeed, many useful properties will follow from the concavity of the following function.
$$\Lambda : \mathcal{A}_\R \to \R, \quad \alpha \in \mathcal{A}_\lambda \mapsto \lambda.$$
This is simply the base projection map for the family $(\mathcal{A}_\lambda)_{\lambda \in \R}$. However, by \Cref{rhobij} it follows, that we have 
$$\Lambda(\alpha) = \max_{v \in V} \deg(v) - \theta(\alpha),$$ where $\theta(\alpha)$ is the Perron-Frobenius eigenvalue of $Q_\alpha$. From this identity we immediately see that $\Lambda \leq \max_{v \in V} \deg(v)$, as Perron-Frobenius eigenvalues are always positive. This means that there are no multiplicative $\lambda$-harmonic functions for $\lambda$ above a certain threshold. Since we want a precise threshold, we define
$$ \lambda_0 \defeq \sup_{\alpha \in \R^d}  \Lambda(\alpha) = \sup \{\lambda \in \R \mid \mathcal{A}_\lambda \neq \emptyset \} = \sup \{\lambda \in \R \mid \mathcal{M}_\lambda \neq \emptyset \} $$
Indeed this supremum is a maximum, which we will see in \Cref{Aprop} below. Since $\lambda_0$ is finite and $\mathcal{A}_\lambda$ is empty for $\lambda > \lambda_0$, we will often write $\mathcal{A}_{(\lambda, \lambda_0]}$ and $\mathcal{M}_{(\lambda, \lambda_0]}$ instead of $\mathcal{A}_{(\lambda, \infty)}$ and $\mathcal{M}_{(\lambda, \infty)}$ for $\lambda < \lambda_0$. Of course, $\lambda_0$ can never be $-\infty$, as $\mathcal{A}_{(-\infty, \lambda_0]} = \R^d \neq \emptyset$.

 \begin{remark}
 By definition, $\lambda_0$ is the supremum of all $\lambda \in \R$ for which there exist \emph{any} $\lambda$-harmonic functions, as ${\mathcal{M}_\lambda}$ is empty if and only if $\mathcal{K}_\lambda$ is empty. Then, if the operator $H = H_{b,c}$ is self-adjoint with respect to a measure $m$, the theorem of Agmon-Allegretto-Piepenbrink, see \cite{keller2019criticality}, states that $\lambda_0$ is exactly the bottom of the spectrum of $H$ on  $\ell^2(X,m)$.
 \end{remark}

\begin{remark}
    In order to compute or estimate $\lambda_0$, one can use the representation $\Lambda(\alpha) = \max_{v \in V} \deg(v) - \theta(\alpha)$. Indeed, finding the minimum $\theta_{\mathrm{min}}$ of all Perron-Frobenius eigenvalues of the matrices $Q_\alpha$, one obtains
    $$\lambda_0 = \max_{v \in V} \deg(v) - \theta_{\mathrm{min}}.$$
    Moreover, finding any one Perron-Frobenius eigenvalue $\theta(\alpha)$ of a matrix $Q_\alpha$ already gives a lower bound
    $$ \lambda_0 \geq \max_{v \in V} \deg(v) - \theta(\alpha),$$
    which yields a lower bound on the spectral gap for a self-adjoint operator $H_{b,c}$.
\end{remark}
    
In \Cref{Aprop} we will prove several properties of the sets $\mathcal{A}_\lambda$. The proof, is similar to its analogue \cite[Corollary 5.14]{linPinchover}, but will make use of both representations of $\mathcal{A}_\lambda$ as preimages under $\Lambda$ of $\lambda$ and as images under $\rho$ of $\mathcal{M}_\lambda$. To use these two respectively, we have the following two lemmas. \Cref{lambdaProp} is analogous to \cite[Lemma 5.12]{linPinchover}, where the direct computation which is omitted there is just an application of Hölder's inequality here. \Cref{hhomeo}, on the other hand, is a simple continuity result, which has no clear analogue in \cite{linPinchover}.

\begin{lemma}\label{lambdaProp}
The map $\Lambda : \R^d \to (-\infty, \lambda_0]$ is strictly concave and continuous.

\begin{proof}
Let $\alpha_1, \alpha_2 \in \mathrm{Hom}(\Z^d,\R)$ be different and $t \in (0,1)$. Define $\lambda_i = \Lambda(\alpha_i)$. We know that the functions $f_{\alpha_i} \defeq \rho^{-1}(\alpha_i)$ are normalised, $\lambda_i$-harmonic, positive and multiplicative with the characters $\exp \circ \alpha_i$ for $i \in \{ 1,2\}$. In particular,
$$\sum_{y \in X}b(x,y)f_{\alpha_i}(y) = (\deg(x) - \lambda_i)f_{\alpha_i}(x)$$
holds for all $x \in X$.
Moreover, define $\lambda_t \defeq t\lambda_1 + (1-t)\lambda_2$ and $f_t \in C(X)$ by
$$f_t(x) \defeq (f_{\alpha_1}(x))^t (f_{\alpha_2}(x))^{(1-t)}.$$
Then we see that $f_t$ is normalised and multiplicative with the character $\exp \circ \alpha_t$, where $\alpha_t := t\alpha_1 + (1-t)\alpha_2$ and we also have $f_1 = f_{\alpha_1}$ and $f_2 = f_{\alpha_2}$. Now for a fixed $x \in X$ we may view $X$ as a measure space with the discrete measure $b(x, \cdot)$ and then Hölder's inequality for $p = \frac{1}{t}, q = \frac{1}{1-t}$ yields

\begin{align*}
\sum_{y \in X} b(x,y)f_t(y) &\leq \left(\sum_{y \in X}b(x,y)f_1(y)\right)^t\left(\sum_{y \in X}b(x,y)f_2(y)\right)^{(1-t)} \\
&= (\deg(x) - \lambda_1)^tf_{\alpha_1}(x)^t(\deg(x) - \lambda_2)^{(1-t)} f_{\alpha_2}(x)^{(1-t)} \\
&\leq (\deg(x) - \lambda_t)f_t(x) 
\end{align*}
where the inequality is strict for some $x \in X$. 

Indeed, assume that equality holds for all $x \in X$. Then we can see from the second inequality that $\lambda_1 = \lambda_2 =: \lambda$. From the first inequality, we see that $f_1$ and $f_2$ are linearly dependent when restricted to the support of $b(x,\cdot)$ for all $x \in X$. Let $x \in X$ and $\mu_x > 0$ such that $f_1 = \mu_x f_2$ on the support of $b(x, \cdot)$. It follows from $\lambda$-harmonicity of $f_1$ and $f_2$ that

\begin{align*}
    (\deg(x) - \lambda) f_1(x) &= \sum_{y \in X} b(x,y) f_1(y) \\ &= \sum_{y \in X} b(x,y) \mu_x f_2(y) \\
    &= (\deg(x) - \lambda) \mu_xf_2(x) \end{align*}
and hence $f_1(x)/f_2(x) = \mu_x = f_1(y)/f_2(y)$ for all $x \sim y$. Using the fact that $f_1(x_0)/f_2(x_0) = 1$ and conectedness of the graph, it then follows that $f_1 = f_2$, hence $\alpha_1 = \alpha_2$, which is a contradiction.

We conclude that $f_t$ is $\lambda_t$-superharmonic but not $\lambda_t$-harmonic. Assume that $\Lambda(\alpha_t) \leq \lambda_t$. Then 
$$(H - \Lambda(\alpha_t))f_t = (H - \lambda_t)f_t + (\lambda_t - \Lambda(\alpha_t))f_t \geq 0,$$
so the functions $f_t$ and $\rho^{-1}(\alpha_t)$ are both normalised and multiplicative with the same character $\exp \circ \alpha_t$, but $f_t$ is $\Lambda(\alpha_t)$-superhamonic and not harmonic and $\rho^{-1}(\alpha_t)$ is $\Lambda(\alpha_t)$-harmonic, so $f_t \neq \rho^{-1}(\alpha_t)$. This is a contradiction to \Cref{supersubharm}. It follows that $\Lambda(\alpha_t) > \lambda_t$, showing strict concavity of $\Lambda$. Since $\R^d$ is open it follows from concavity that $\Lambda$ is also continuous.
\end{proof}
\end{lemma}

\begin{lemma}\label{hhomeo}
The map $\rho : {\mathcal{M}_{(-\infty, \lambda_0]}} \to \R^d$ is a homeomorphism. Moreover, for any $\lambda \in \R$ the sets $\mathcal{M}_{[\lambda, \lambda_0]}$ and $\mathcal{A}_{[\lambda, \lambda_0]}$ are compact.

\begin{proof}
Recall that $\rho$ is bijective by \Cref{rhobij}. Moreover, for a sequence $f_n$ converging pointwise to $f$ in $\mathcal{M}_{(-\infty, \lambda_0]}$, we have
 $$\lim_{n \to \infty} \rho(f_n)(g) = \lim_{n \to \infty} \log(\gamma_{f_n}(g)) = \lim_{n \to \infty} \log(f_n(gx_0))  = \log(f(gx_0)) = \rho(f)(g),$$
 so $\rho$ is continuous.
 
 To show continuity of the inverse, we first note that for any $\lambda \in \R$ we have ${\mathcal{M}_{[\lambda, \lambda_0]}} \subseteq \mathcal{S}^+_{1,\lambda}$, as for $\lambda' \geq \lambda$ and $f \in {\mathcal{M}_{\lambda'}} \subseteq \mathcal{H}_{\lambda'}^+$ we have 
 $$(H-\lambda)f = (H - \lambda')f + (\lambda' - \lambda)f  \geq 0,$$ so $f \in \mathcal{S}^+_{1,\lambda}$. Then, since $\mathcal{S}^+_{1,\lambda}$ is compact by \Cref{Scomp}, it follows that ${\mathcal{M}_{[\lambda, \lambda_0]}}$ is compact as it is closed. Indeed, for a sequence, $(f_n)_{n \in \N} \in \mathcal{M}_{[\lambda, \lambda_0]}$ converging pointwise to $f \in C(X)$ we see that $f$ is non-negative, normalised at $x_0$ and multiplicative in exactly the same way that we showed closedness of $\mathcal{M}$ at the beginning of the chapter. Moreover, since $f_n$ is $\lambda_n$-harmonic with $\lambda_n \in [\lambda, \lambda_0]$ for each $n \in \N$, we may choose a subsequence $n_k$ such that $\lambda_{n_k} \to \mu \in [\lambda, \lambda_0].$ Then, by local finiteness of the graph, we see that $f$ is $\mu$-harmonic, and hence $f \in \mathcal{M}_\mu \subseteq \mathcal{M}_{[\lambda, \lambda_0]}$.  It follows that $\mathcal{M}_{[\lambda, \lambda_0]}$ is compact and hence also $\mathcal{A}_{[\lambda, \lambda_0]}$ is compact by continuity of $\rho$. From this we conclude that 
 $$\rho^{-1}|_{\mathcal{A}_{[\lambda, \lambda_0]}} : \mathcal{A}_{[\lambda, \lambda_0]} \to {\mathcal{M}_{[\lambda, \lambda_0]}}$$
 is automatically continuous, in particular the restriction of $\rho^{-1}$ to $\mathcal{A}_{(\lambda, \lambda_0]}$ is continuous. By \Cref{lambdaProp}, $\Lambda$ is continuous, therefore $\mathcal{A}_{(\lambda, \lambda_0]} = \Lambda^{-1}((\lambda, \infty))$ is open, and hence $\rho^{-1}$ itself is continuous at all points in $\mathcal{A}_{(\lambda, \lambda_0]}$. Since $\lambda \in \R$ was arbitrary and $\cup_{\lambda \in \R} \mathcal{A}_{(\lambda, \lambda_0]} = \R^d$, it follows that $\rho^{-1}$ is continuous on $\R^d$, which shows that $\rho$ is a homeomorphism.
\end{proof}
\end{lemma}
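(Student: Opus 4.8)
The plan is to establish that $\rho$ is a continuous bijection whose inverse is also continuous, and to extract the compactness statements along the way, since they are exactly what makes continuity of $\rho^{-1}$ accessible. Recall from \Cref{rhobij} that $\rho$ is already known to be bijective, and that $\mathcal{M}_\lambda = \emptyset$ for $\lambda > \lambda_0$, so the domain $\mathcal{M}_{(-\infty,\lambda_0]}$ coincides with $\mathcal{M}_\R$. First I would verify continuity of $\rho$ directly: since the topology is that of pointwise convergence and $\rho(f)$ is determined by the finitely many values $\gamma_f(g) = f(gx_0)$ on a generating set of $G/R \cong \Z^d$, a pointwise convergent sequence $f_n \to f$ satisfies $\rho(f_n)(g) = \log f_n(gx_0) \to \log f(gx_0) = \rho(f)(g)$ on each generator, which is convergence in $\R^d$.

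Next I would prove compactness of $\mathcal{M}_{[\lambda,\lambda_0]}$, which is the technical heart of the argument. The key observation is the inclusion $\mathcal{M}_{[\lambda,\lambda_0]} \subseteq \mathcal{S}^+_{1,\lambda}$: any $f \in \mathcal{M}_{\lambda'}$ with $\lambda' \geq \lambda$ satisfies $(H-\lambda)f = (\lambda'-\lambda)f \geq 0$, so it is $\lambda$-superharmonic. Since $\mathcal{S}^+_{1,\lambda}$ is compact by \Cref{Scomp}, it then suffices to show $\mathcal{M}_{[\lambda,\lambda_0]}$ is closed. Given a pointwise convergent sequence $f_n \to f$ with $f_n \in \mathcal{M}_{\lambda_n}$, the limit $f$ is still non-negative, normalised and multiplicative by the same closedness argument already used for $\mathcal{M}$; the only genuinely new point is identifying a single harmonicity level for $f$. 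Here I would pass to a subsequence with $\lambda_{n_k} \to \mu \in [\lambda,\lambda_0]$, using compactness of the interval, and invoke local finiteness to commute the limit with the finite defining sum, concluding that $f$ is $\mu$-harmonic and hence $f \in \mathcal{M}_\mu \subseteq \mathcal{M}_{[\lambda,\lambda_0]}$. Compactness of $\mathcal{A}_{[\lambda,\lambda_0]} = \rho(\mathcal{M}_{[\lambda,\lambda_0]})$ then follows immediately as the continuous image of a compact set.

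Finally, continuity of $\rho^{-1}$ is where the real subtlety lies, and the compactness results reduce it to soft topology. On each compact set $\mathcal{A}_{[\lambda,\lambda_0]}$, the map $\rho$ restricts to a continuous bijection onto the Hausdorff space $\mathcal{M}_{[\lambda,\lambda_0]}$, hence to a homeomorphism, so $\rho^{-1}$ is continuous there and in particular on the smaller set $\mathcal{A}_{(\lambda,\lambda_0]}$. By \Cref{lambdaProp} the function $\Lambda$ is continuous, so $\mathcal{A}_{(\lambda,\lambda_0]} = \Lambda^{-1}((\lambda,\infty))$ is open. Since continuity is a local property and the open sets $\mathcal{A}_{(\lambda,\lambda_0]}$ exhaust $\R^d$ as $\lambda \to -\infty$, it follows that $\rho^{-1}$ is continuous on all of $\R^d$, completing the proof that $\rho$ is a homeomorphism.

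The hard part will be coordinating these steps correctly: the compactness of $\mathcal{M}_{[\lambda,\lambda_0]}$ cannot use a single fixed eigenvalue, so the subsequence argument selecting a limiting $\mu$ together with local finiteness is essential, and the continuity of $\rho^{-1}$ is obtained not by a direct estimate but by the indirect matching of the compact pieces $\mathcal{A}_{[\lambda,\lambda_0]}$ (on which one gets a homeomorphism for free) with the open cover $\{\mathcal{A}_{(\lambda,\lambda_0]}\}_{\lambda \in \R}$ of $\R^d$. This matching is exactly what the identity $\mathcal{A}_{(\lambda,\lambda_0]} = \Lambda^{-1}((\lambda,\infty))$ and the continuity of $\Lambda$ provide.
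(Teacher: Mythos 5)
Your proposal is correct and follows essentially the same route as the paper: continuity of $\rho$ from pointwise convergence, the inclusion $\mathcal{M}_{[\lambda,\lambda_0]} \subseteq \mathcal{S}^+_{1,\lambda}$ plus the subsequence/local-finiteness argument for closedness, the compact-to-Hausdorff fact for continuity of $\rho^{-1}$ on each $\mathcal{A}_{[\lambda,\lambda_0]}$, and the open exhaustion $\mathcal{A}_{(\lambda,\lambda_0]} = \Lambda^{-1}((\lambda,\infty))$ to globalize. The only blemish is a swapped label in your third paragraph: it is $\rho$ restricted to the \emph{compact} set $\mathcal{M}_{[\lambda,\lambda_0]}$ that is a continuous bijection onto the Hausdorff set $\mathcal{A}_{[\lambda,\lambda_0]}$, not the other way around, but the conclusion you draw from it is the right one.
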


\begin{theorem}\label{Aprop}
Denote by $\partial B$ the topological boundary of a set $B \subset \R^d$. If $d \geq 1$ the following statements hold for any $\lambda \in \R$.
\begin{enumerate}[label=(\alph*)]
    \item The sets $\mathcal{A}_{[\lambda, \lambda_0]}$ and $\mathcal{A}_{(\lambda, \lambda_0]}$ are stricly convex.
    \item The set $\mathcal{A}_{(\lambda, \lambda_0]}$ is open and $\mathcal{A}_{[\lambda, \lambda_0]}$ is compact.
    \item $\mathcal{A}_\lambda = \mathrm{ex} \: \mathcal{A}_{[\lambda, \lambda_0]} = \partial \mathcal{A}_{[\lambda, \lambda_0]}$
    \item The set $\mathcal{A}_{\lambda_0}$ consists of exactly one point.
\end{enumerate}

\begin{proof}
We first recall that by definition of $\lambda_0$, we have $\mathcal{A}_{(\lambda,\lambda_0]} = \mathcal{A}_{(\lambda,\infty)}$ and $\mathcal{A}_{[\lambda, \lambda_0]} =\mathcal{A}_{[\lambda, \infty)}$ for all $\lambda \in  [-\infty, \lambda_0)$. We also recall that $\Lambda$ is continuous and stricly concave by \Cref{lambdaProp}.
\begin{enumerate}[label=(\alph*)]
    \item This follows from strict concavity of $\Lambda$, as a convex combination of elements in $\mathcal{A}_{\lambda_1}$ and $\mathcal{A}_{\lambda_2}$ with $\lambda_1, \lambda_2 \geq \lambda$ is mapped to some $\lambda_3$ which is strictly greater than the convex combination of $\lambda_1$ and $\lambda_2$, in particular $\lambda_3 > \lambda$.
    
    \item Since $\Lambda$ is continuous, it follows that $\mathcal{A}_{(\lambda, \lambda_0]} = \Lambda^{-1}((\lambda, \infty))$ is open. Compactness of  $\mathcal{A}_{[\lambda, \lambda_0]}$ was shown in \Cref{hhomeo}.
    
    \item Let $f\in \mathcal{A}_\lambda$ and $f = tf_1 + (1-t)f_2$ with $\Lambda(f_1) \geq \lambda$ and $\Lambda(f_2) \geq \lambda$. Then, by strict concavity of $\Lambda$, we have $f_1 = f_2 = f$, hence $f \in \mathrm{ex}\: \mathcal{A}_{[\lambda, \lambda_0]}$. This shows 
    $$\mathcal{A}_\lambda \subseteq \mathrm{ex} \mathcal{A}_{[\lambda, \lambda_0]} \subseteq \partial \mathcal{A}_{[\lambda, \lambda_0]} \subseteq \mathcal{A}_{[\lambda, \lambda_0]} \setminus \mathcal{A}_{(\lambda, \lambda_0]} = \mathcal{A}_\lambda,$$
    since $\mathcal{A}_{[\lambda, \lambda_0]}$ is closed and $\mathcal{A}_{(\lambda, \lambda_0]}$ is open by (b).
    
    \item We have
    $$\mathcal{A}_{\lambda_0} = \bigcap_{n=1}^\infty \mathcal{A}_{[\lambda_0 - \frac{1}{n}, \lambda_0]},$$
    so using (b), $\mathcal{A}_{\lambda_0}$ is nonempty as a countable intersection of nested nonempty compact sets. Moreover, if $\alpha_1 \neq \alpha_2 \in \mathcal{A}_{\lambda_0}$ then $\Lambda(\frac{1}{2}\alpha_1 + \frac{1}{2}\alpha_2) > \lambda_0$ by strict concavity, which is a contradiction. \qedhere
\end{enumerate}
\end{proof}
\end{theorem}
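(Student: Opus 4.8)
The plan is to deduce all four statements from one structural fact, established in \Cref{lambdaProp}: the map $\Lambda : \R^d \to (-\infty, \lambda_0]$ is continuous and strictly concave. Combined with the identities $\mathcal{A}_{[\lambda,\lambda_0]} = \Lambda^{-1}([\lambda,\infty))$, $\mathcal{A}_{(\lambda,\lambda_0]} = \Lambda^{-1}((\lambda,\infty))$ and $\mathcal{A}_\lambda = \Lambda^{-1}(\{\lambda\})$, this exhibits each $\mathcal{A}_{[\lambda,\lambda_0]}$ as a superlevel set of a strictly concave continuous function defined on all of $\R^d$, so the theorem becomes a package of standard facts about such superlevel sets. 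Since every set in sight is empty for $\lambda > \lambda_0$ and collapses to the content of (d) when $\lambda = \lambda_0$, I would first dispose of those degenerate ranges and then work with $\lambda < \lambda_0$.

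For (a) and (b) I would argue directly from concavity and continuity. Given distinct $\alpha_1, \alpha_2 \in \mathcal{A}_{[\lambda,\lambda_0]}$ and $t \in (0,1)$, strict concavity gives $\Lambda(t\alpha_1 + (1-t)\alpha_2) > t\Lambda(\alpha_1) + (1-t)\Lambda(\alpha_2) \geq \lambda$, so the entire open segment lands in $\mathcal{A}_{(\lambda,\lambda_0]}$; the same estimate applied to points of $\mathcal{A}_{(\lambda,\lambda_0]}$ yields strict convexity of both sets. For (b), openness of $\mathcal{A}_{(\lambda,\lambda_0]} = \Lambda^{-1}((\lambda,\infty))$ is immediate from continuity of $\Lambda$, while compactness of $\mathcal{A}_{[\lambda,\lambda_0]}$ was already obtained in \Cref{hhomeo}.

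For (c) I would establish the chain $\mathcal{A}_\lambda \subseteq \mathrm{ex}\:\mathcal{A}_{[\lambda,\lambda_0]} \subseteq \partial \mathcal{A}_{[\lambda,\lambda_0]} \subseteq \mathcal{A}_\lambda$ and conclude that all three coincide. The first inclusion is again strict concavity: if $\alpha \in \mathcal{A}_\lambda$ could be written as $t\alpha_1 + (1-t)\alpha_2$ with $\alpha_1 \neq \alpha_2$ both at level $\geq \lambda$ and $t \in (0,1)$, then $\Lambda(\alpha) > \lambda$, a contradiction, so $\alpha$ is extreme. The second inclusion is the standard fact, valid because $d \geq 1$, that an extreme point of a convex set cannot be an interior point. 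The third uses (b): since $\mathcal{A}_{(\lambda,\lambda_0]}$ is open and contained in the closed set $\mathcal{A}_{[\lambda,\lambda_0]}$, it lies in the interior, so no boundary point belongs to it, whence $\partial \mathcal{A}_{[\lambda,\lambda_0]} \subseteq \mathcal{A}_{[\lambda,\lambda_0]} \setminus \mathcal{A}_{(\lambda,\lambda_0]} = \mathcal{A}_\lambda$.

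For (d) the two tasks are attainment and uniqueness, and the attainment is the one place I expect genuine work rather than formal deduction. I would write $\mathcal{A}_{\lambda_0} = \bigcap_{n \geq 1} \mathcal{A}_{[\lambda_0 - 1/n,\, \lambda_0]}$, note that each member is nonempty because $\lambda_0 = \sup \Lambda$ forces some $\alpha$ with $\Lambda(\alpha) > \lambda_0 - 1/n$, compact by (b), and that the family is nested decreasing; Cantor's intersection theorem then gives $\mathcal{A}_{\lambda_0} \neq \emptyset$, i.e. the supremum defining $\lambda_0$ is actually attained. Uniqueness is then immediate: two distinct points of $\mathcal{A}_{\lambda_0}$ would have a midpoint with $\Lambda > \lambda_0$ by strict concavity, contradicting $\lambda_0 = \sup\Lambda$. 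The main obstacle is precisely this attainment step, since it is the only assertion that is not a purely formal consequence of strict concavity but genuinely requires the compactness of the superlevel sets from \Cref{hhomeo}; a secondary point demanding care is the bookkeeping in (c) identifying the topological boundary with the level set, which rests on $d \geq 1$ and on the openness and compactness supplied by (b).
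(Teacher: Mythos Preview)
Your proposal is correct and follows essentially the same approach as the paper: both deduce everything from the strict concavity and continuity of $\Lambda$ established in \Cref{lambdaProp}, invoke \Cref{hhomeo} for compactness of $\mathcal{A}_{[\lambda,\lambda_0]}$, prove (c) via the same chain of inclusions $\mathcal{A}_\lambda \subseteq \mathrm{ex}\,\mathcal{A}_{[\lambda,\lambda_0]} \subseteq \partial \mathcal{A}_{[\lambda,\lambda_0]} \subseteq \mathcal{A}_\lambda$, and obtain (d) by writing $\mathcal{A}_{\lambda_0}$ as a nested intersection of nonempty compact sets together with strict concavity for uniqueness. Your discussion is slightly more explicit about where the hypothesis $d \geq 1$ enters and about the degenerate ranges of $\lambda$, but the logical skeleton is the same.
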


\begin{proof}[Proof of \Cref{thm3}]
Since $(b,c)$ is locally finite, we may apply \Cref{lemZd} and factor the graph $(b,c)$ by the obtained $R$ as explained in the second chapter. We may apply \emph{all} the parts of \Cref{factorsLem}, as $(b,c)$ is connected, locally finite, the $G$-action is cocompact, $R$ includes all stabilisers, and every $f \in \mathcal{H}_\lambda$ is $R$-invariant. We obtain that the resulting graph $(b_R, c_R)$ on $X_R$ is locally finite and connected, has a cocompact and free group action of $G/R \cong \Z^d$, and factoring $(b, c - \lambda)$ we obtain $(b_R, c_R - \lambda)$. Finally, the homeomorphism
$$\Phi : C(X)^R \to C(X_R)$$
restricts to homeomorphisms identifying the sets $\mathcal{H}_\lambda^+, \mathcal{K}_\lambda$ and $\mathcal{M}_\lambda$ for the graph $(b,c)$ with their respective counterparts for $(b_R,c_R)$.

This means that without loss of generality, we may assume that the group action is a free $\Z^d$-action. Then with the sets $\mathcal{A}_\lambda$ defined as above and $\lambda_0 \defeq \sup \{\lambda \in \R \mid  \mathcal{A}_\lambda \neq \emptyset\}$, it follows from \Cref{hhomeo} that

$$\mathcal{M}_I \cong \mathcal{A}_I$$
for all $I \subseteq \R$. In particular, when $d = 0$ we have $\mathcal{A}_{\R} = \{ 0 \}$, so $\mathcal{M}_{\lambda_0}$ contains a single point, and the other $\mathcal{M}_\lambda$ are empty. For $d \geq 1$ we obtain the following:
\begin{enumerate}[label=(\alph*)]
    \item For $\lambda > \lambda_0$ we have $\mathcal{M}_\lambda \cong \mathcal{A}_\lambda = \emptyset$
    by definition of $\lambda_0$.
    \item By \Cref{Aprop} (d) we know that $\mathcal{A}_{\lambda_0}$
    consists of exactly one point and $\mathcal{M}_{\lambda_0} \cong \mathcal{A}_{\lambda_0}$.
    
    \item For $\lambda < \lambda_0$ we have $\mathcal{M}_\lambda \cong \mathcal{A}_\lambda = \partial \mathcal{A}_{[\lambda, \lambda_0]}$
    by \Cref{Aprop} (c). Moreover, $\mathcal{A}_{[\lambda, \lambda_0]} \subseteq \R^d$ is compact and convex by \Cref{Aprop} (a) and (b), so its boundary is homeomorphic to a $(d-1)$-sphere.
    \item For $\lambda \in (-\infty, \lambda_0)$ we have $\mathcal{M}_{(\lambda, \lambda_0]} \cong \mathcal{A}_{(\lambda, \lambda_0]}$, and $\mathcal{A}_{(\lambda, \lambda_0]}$ is an open and convex subset of $\R^d$ by \Cref{Aprop} (a) and (b), so it is homeomorphic to an open $d$-ball. Also, $\mathcal{M}_{(-\infty, \lambda_0]} \cong \mathcal{A}_{(-\infty, \lambda_0]} = \R^d$.
\end{enumerate}
This finishes the proof.
\end{proof}

\addcontentsline{toc}{section}{Bibliography}
\bibliography{literature.bib}

\end{document}